\newcommand{\shape}{\ensuremath{\mathord{\raisebox{0.5pt}{\text{\rm\esh}}}}}
\newcommand{\bN}{\mathbb N}
\newcommand{\bZ}{\mathbb Z}
\newcommand{\bR}{\mathbb R}
\newcommand{\bS}{\mathbb S}
\newcommand{\Spec}{\ensuremath{\mathrm{Spec}}}
\newcommand{\Spf}{\ensuremath{\mathrm{Spf}}}
\newcommand{\fgkAlg}{\ensuremath{k\mathrm{-Alg}_{\mathrm{fg}}}}
\newcommand{\fgkAlgTop}{\ensuremath{k\mathrm{-Alg}_{\mathrm{fg,top}}}}
\newcommand{\ignore}[1]{}
\newcommand{\et}{\mathsf{et}}
\title{Modal descent}
\author{Felix Cherubini and Egbert Rijke}
\date{\today}
\begin{document}

\maketitle

\begin{abstract}
  Any modality in homotopy type theory gives rise to an orthogonal factorization system of which the left class is stable under pullbacks. We show that there is a second orthogonal factorization system associated to any modality, of which the left class is the class of $\modal$-equivalences and the right class is the class of $\modal$-\'etale maps. This factorization system is called the reflective factorization system of a modality, and we give a precise characterization of the orthogonal factorization systems that arise as the reflective factorization system of a modality. In the special case of the $n$-truncation the reflective factorization system has a simple description: we show that the $n$-\'etale maps are the maps that are right orthogonal to the map $\unit \to \sphere{n+1}$. We use the $\modal$-\'etale maps to prove a modal descent theorem: a map with modal fibers into $\modal X$ is the same thing as a $\modal$-\'etale map into a type $X$. We conclude with an application to real-cohesive homotopy type theory and remarks how $\modal$-étale maps relate to the formally etale maps from algebraic geometry. 
\end{abstract}

\section{Introduction}
In 2011 Urs Schreiber and Mike Shulman introduced Modalities to Homotopy Type Theory,
with the idea to use these extended theories to reason about more specialized $(\infty,1)$-toposes.
One special application they had in mind was to use Homotopy Type Theory to talk about \emph{cohesive} $(\infty,1)$-toposes \cite{ShulmanSchreiber}.
This idea of a cohesive type theory was later developed in \cite{ShulmanRealCohesion} for a special case.
While the results in this article are about one \emph{monadic} modality,
these ideas were relevant for the development of our results and we will discuss possible applications along these lines.
  
Monadic modalities, which we will just call modalities in this article, were defined in \cite[Section 7.7]{UFP}. They were studied extensively in \cite{RijkeSpittersShulman}, where it was shown that any modality gives rise to an orthogonal factorization system of which the left class is stable under pullbacks. Hence we will call this factorization system the \emph{stable} factorization system of a modality. One of our main results is \cref{thm:rfs_orthogonal}, which shows that there is a second orthogonal factorization system that can be obtained from a modality: the \emph{reflective} factorization system. The left maps of the reflective factorization system are the maps that are inverted by the modality and the right maps are those with a cartesian naturality square. In the case where the modality is lex, those left and right classes coincide with the left and right classes of the stable orthogonal factorization system. The reflective factorization system was already used in category theory, e.g. in \cite{cassidy_hebert_kelly_1985},
where the reflector of a reflective subcategory takes the role of the modality. In \cref{thm:rfs} we give a precise characterization of those orthogonal factorization systems that arise as the reflective factorization system of a modality.

We call right maps of the reflective factorization system $\modal$-étale, where $\modal$ is the modality.
This name is inspired by the formally étale maps from algebraic geometry,
which are maps reminiscent of local homeomorphisms in topology.
In topology, ``local'' means that the maps are trivial over some open subset, while \emph{formally} étale maps are trivial on \emph{formal} disks.
In the case of modalities there is a similar notion of $\modal$-disks, and we show that a map is $\modal$-étale maps if and only if it is trivial on $\modal$-disks in a sense made precise in \cref{prp:locally_trivial}.
The relevant definitions from algebraic geometry are included in \cref{subsection:algebraic geometry}
together with proofs that they could be defined analogous to our definitions of $\modal$-étale maps and $\modal$-disks.

Another way in which $\modal$-\'etale maps can be seen as locally trivial is the fact that a map $p:E\to B$ is $\modal$-\'etale if and only if it extends uniquely to a map $\tilde{p}:\tilde{E}\to \modal B$ with $\modal$-modal fibers. This claim, which we establish in \cref{thm:modal_descent}, is the modal descent theorem. 

In \cref{thm:char_n_etale} we use our abstract theory to prove the following characterization of étale maps for the $n$-truncations, for $n\geq -1$:
A map $f:A\to B$ is $n$-\'etale if and only if it is right orthogonal to the base point inclusion $\unit\to\sphere{n+1}$.

In \cref{subsection:toplogical stacks} we show how the modal descent theorem (\cref{thm:modal_descent})
subsumes the classical fundamental theorem of the theory of covering spaces in real-cohesive homotopy type theory.
This also yields a candidate extension of this classical theorem to topological stacks.
In fact, it is essentially in this context that the modal descent theorem was already found and proven by Urs Schreiber as \cite[Proposition 5.2.42]{SchreiberDcct}.
  
We thank Jonas Frey and Mike Shulman for help in understanding factorization systems in a long email discussion in early 2018. This was also the time when the second author wrote his PhD thesis, so some of our results we present there, especially those in \cref{section:modal-descent} and \cref{section:reflective-factorization-system} have already appeared in \cite{rijke-phd}.
Discussions with and remarks of Jonathan Zachhuber, Tobias Columbus, Marcelo Fiore, Steve Awodey, Eric Finster, André Joyal, Mathieu Anel, and Dan Christensen were helpful for this work.
The anonymous reviewers greatly improved the article with their comments and suggestions. 
This material is based upon work supported by the Air Force Office of Scientific Research under award number FA9550-17-1-0326, and through MURI grant FA9550-15-1-0053.

\section{Preliminaries}
We assume that the reader is familiar with the basics of homotopy type theory \cite{UFP} and the basic theory of (idempotent, monadic) modalities, as presented in \cite{RijkeSpittersShulman}. In this preliminary section, we recall the basic concepts from those two sources.

Just as in \cite{UFP}, we write $x=y$ for the type of identifications of $x$ and $y$, provided that both $x$ and $y$ have a common type $X$. Sometimes we call identifications equalities. We write
\begin{equation*}
  \apfunc{f}:(x=y)\to (f(x)=f(y))
\end{equation*}
for the action on identifications of a function $f$. Concatenation of identifications is written in diagrammatic order, i.e., we write $\ct{p}{q}$ for the concatenation of $p:x=y$ and $q:y=z$. The fiber of a map $f:A\to B$ at $b:B$ is defined to be the type
\begin{equation*}
  \fib{f}{b}\defeq \sm{x:A}f(x)=b.
\end{equation*}
Recall that a type $X$ is said to be \define{contractible} if it comes equipped with a term of type
\begin{equation*}
  \iscontr(X)\defeq \sm{x:X}\prd{y:X}x=y.
\end{equation*}
A map is an equivalence if and only if all its fibers are contractible.

We will frequently make use of the concept of \emph{proposition} in homotopy type theory. Propositions are types of which all identity types are contractible, i.e., a type $X$ is said to be a proposition if it comes equipped with a term of type
\begin{equation*}
  \isprop(X)\defeq\prd{x,y:X}\iscontr(x=y).
\end{equation*}
It is important in homotopy type theory to distinguish between properties and structures. A type $P(x)$ indexed by $x:X$ is said to be a \define{property} of $X$ if the type $P(x)$ is a proposition. Otherwise, it is called a \define{structure} on $x$. The type $\mathsf{Prop}$ of all propositions in a universe $\UU$ is defined by
\begin{equation*}
  \mathsf{Prop}\defeq \sm{X:\UU}\isprop(X).
\end{equation*}

We make extensive use of homotopy pullbacks. The most important property we will be relying on is the following theorem:

\begin{thm}\label{thm:pullback}
  Consider a commuting square
  \begin{equation*}
    \begin{tikzcd}
      A \arrow[r,"h"] \arrow[d,swap,"f"] & X \arrow[d,"g"] \\
      B \arrow[r,swap,"i"] & Y
    \end{tikzcd}
  \end{equation*}
  with homotopy $H:i\circ f\htpy g\circ h$. Then the following are equivalent:
  \begin{enumerate}
  \item The square is a pullback square.
  \item For each $b:B$ the induced map on fibers
    \begin{equation*}
      \fib{f}{b} \to \fib{g}{i(b)}
    \end{equation*}
    given by $(a,p)\mapsto (h(a),\ct{H(a)^{-1}}{\ap{i}{p}})$, is an equivalence.
  \end{enumerate}
\end{thm}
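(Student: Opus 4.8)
The plan is to reduce the statement to the standard fact that a map over a fixed base is an equivalence if and only if it induces an equivalence on every fiber. Write $P \defeq \sm{b:B}\sm{x:X}(i(b)=g(x))$ for the canonical pullback of $i$ and $g$, and let $\gamma:A\to P$ be the gap map induced by the square, given by $\gamma(a)\defeq (f(a),h(a),H(a))$; here $H(a):i(f(a))=g(h(a))$ is exactly what makes this a well-typed element of $P$. By definition the square is a pullback square precisely when $\gamma$ is an equivalence, so condition (1) is equivalent to $\gamma$ being an equivalence.

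The key observation is that $\gamma$ is a map over $B$: the first projection $\pi_1:P\to B$ satisfies $\pi_1\circ\gamma = f$ judgmentally. First I would invoke the fiberwise characterization of equivalences: a fiberwise map over $B$ is an equivalence if and only if the induced map on fibers over each $b:B$ is an equivalence. Thus $\gamma$ is an equivalence if and only if for every $b:B$ the induced map $\fib{f}{b}\to\fib{\pi_1}{b}$ is an equivalence.

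It then remains to identify $\fib{\pi_1}{b}$ and this induced map with the data in condition (2). Computing the fiber of the projection, contracting the based path component $b'=b$ and reversing the remaining identification, yields a canonical equivalence $\fib{\pi_1}{b}\simeq \fib{g}{i(b)}$. Chasing an element $(a,p)$ with $p:f(a)=b$ through the induced fiber map of $\gamma$ and then through this equivalence — which amounts to transporting $H(a)$ along $p$ in the family $b'\mapsto (i(b')=g(h(a)))$ and inverting the result — I expect to recover exactly the map $(a,p)\mapsto (h(a),\ct{H(a)^{-1}}{\ap{i}{p}})$ of the statement. Since composing with an equivalence preserves the property of being an equivalence, the induced fiber map of $\gamma$ is an equivalence if and only if the map in (2) is, which establishes both implications simultaneously.

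The main obstacle is purely the path algebra in this last step: one must verify that the transport of $H(a)$ along $p$ equals $\ct{(\ap{i}{p})^{-1}}{H(a)}$, so that its inverse is $\ct{H(a)^{-1}}{\ap{i}{p}}$, and hence that the abstractly induced map agrees on the nose with the stated formula. Everything else is formal, so I would set up the equivalence $\fib{\pi_1}{b}\simeq\fib{g}{i(b)}$ explicitly enough that this final identification becomes transparent.
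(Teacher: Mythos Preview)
Your argument is correct and matches the paper's own approach: the paper merely sketches that ``the fibers of the gap map are equivalent to the fibers of the induced maps on fibers,'' and your proposal unpacks exactly this, viewing the gap map as a map over $B$ and invoking the fiberwise equivalence criterion. The path-algebra verification you flag as the only real work is indeed the only thing left implicit in the paper's sketch, and your computation of it is right.
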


For an arbitrary commuting square, the induced map into the pullback is called the \define{gap map}. In other words, the gap map of a commuting square
  \begin{equation*}
    \begin{tikzcd}
      A \arrow[r,"h"] \arrow[d,swap,"f"] & X \arrow[d,"g"] \\
      B \arrow[r,swap,"i"] & Y
    \end{tikzcd}
  \end{equation*}
is the unique map $A\to B\times_Y X$ obtained via the universal property of the pullback. One can show that the fibers of the gap map are equivalent to the fibers of the induced maps on fibers. This observation implies the above theorem.

\cref{thm:pullback} has many nice consequences. We mention two here, both of which can be seen as descent theorems.

\begin{thm}[Descent theorem for $\Sigma$-types]\label{thm:descent-sigma}
  A family of commuting squares
\begin{equation*}
  \begin{tikzcd}
    A_i \arrow[d,swap,"f_i"] \arrow[r] & X \arrow[d,"g"] \\
    B_i \arrow[r] & Y
  \end{tikzcd}
\end{equation*}
indexed by $i:I$ is a family of pullback squares if and only if the induced square
\begin{equation*}
  \begin{tikzcd}
    \sm{i:I}A_i \arrow[d] \arrow[r] & X \arrow[d,"g"] \\
    \sm{i:I}B_i \arrow[r] & Y
  \end{tikzcd}
\end{equation*}
is a pullback square.
\end{thm}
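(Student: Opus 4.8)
The plan is to reduce both directions to the fiberwise criterion of \cref{thm:pullback} and then observe that the relevant fibers coincide. Write $h_i:A_i\to X$ and $k_i:B_i\to Y$ for the top and bottom maps of the $i$-th square, so that the top and bottom maps of the induced total square are $(i,a)\mapsto h_i(a)$ and $(i,b)\mapsto k_i(b)$, while the left map is the map $q:\sm{i:I}A_i\to\sm{i:I}B_i$ given by $(i,a)\mapsto(i,f_i(a))$. Applying \cref{thm:pullback} to the total square, it is a pullback square precisely when, for every $(i,b):\sm{i:I}B_i$, the induced map on fibers $\fib{q}{(i,b)}\to\fib{g}{k_i(b)}$ is an equivalence.

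Next I would compute the domain fiber. By the characterization of identifications in a $\Sigma$-type there is an equivalence $\fib{q}{(i,b)}\simeq\fib{f_i}{b}$: a point of the left-hand side is a triple consisting of $j:I$, a term $a:A_j$, and an identification $(j,f_j(a))=(i,b)$; decomposing such an identification into its first component $j=i$ together with the identification lying over it, and then contracting away the first component, leaves exactly a term $a:A_i$ together with an identification $f_i(a)=b$.

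The key step is to verify that, under this equivalence, the induced fiber map of the total square at $(i,b)$ is identified with the induced fiber map of the $i$-th square at $b$. This is a matter of pushing the explicit formula $(a,p)\mapsto(h(a),\ct{H(a)^{-1}}{\ap{i}{p}})$ from \cref{thm:pullback} through the $\Sigma$-decomposition; since the top and bottom maps of the total square restrict in the fiber over $i$ to exactly $h_i$ and $k_i$, the two induced maps agree. I would then conclude by swapping quantifiers: the statement ``for all $(i,b):\sm{i:I}B_i$ the total fiber map is an equivalence'' is equivalent to ``for all $i:I$ and all $b:B_i$ the $i$-th fiber map is an equivalence'', which by \cref{thm:pullback} applied to each square separately is precisely the assertion that every square in the family is a pullback.

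I expect the main obstacle to be this compatibility check: I must ensure that the equivalence on domain fibers genuinely \emph{intertwines} the two induced fiber maps, rather than merely exhibiting an abstract equivalence of the underlying fiber types, so that the property of being an equivalence transfers between the total fiber map and the individual fiber maps. Everything else is a direct application of the fiberwise pullback criterion together with the standard decomposition of paths in a $\Sigma$-type.
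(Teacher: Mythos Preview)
Your proposal is correct and follows exactly the route the paper indicates: the paper does not spell out a proof of this theorem but simply presents it as a consequence of \cref{thm:pullback}, and your argument is precisely the natural way to make that derivation explicit. The fiber computation $\fib{q}{(i,b)}\simeq\fib{f_i}{b}$ via contraction of the first component, together with the compatibility check you flag, is the standard verification, and your care about the intertwining of the fiber maps is well placed.
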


\begin{thm}[Descent theorem for surjective maps]\label{thm:descent-surjective}
  Consider a diagram of the form
  \begin{equation*}
    \begin{tikzcd}
      A \arrow[d] \arrow[r] & X \arrow[d] \arrow[r] & V \arrow[d] \\
      B \arrow[r,swap,"h"] & Y \arrow[r] & W,
    \end{tikzcd}
  \end{equation*}
  in which the left square is a pullback square, and suppose that the map $h:B\to Y$ is surjective. Then the outer rectangle is a pullback if and only if the right square is a pullback.
\end{thm}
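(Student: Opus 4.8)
The plan is to prove both directions through the fiber-wise characterization of pullbacks in \cref{thm:pullback}. Label the vertical maps $f\colon A\to B$, $g\colon X\to Y$ and $k\colon V\to W$, and write $i\colon Y\to W$ for the lower-right map, so that the lower composite is $i\circ h$. The first thing I would record is that the induced-map-on-fibers construction of \cref{thm:pullback} is functorial with respect to horizontal pasting: for every $b:B$ the triangle
\begin{equation*}
\begin{tikzcd}[column sep=small]
\fib{f}{b} \arrow[rr] \arrow[dr] & & \fib{k}{i(h(b))} \\
& \fib{g}{h(b)} \arrow[ur] &
\end{tikzcd}
\end{equation*}
commutes up to homotopy, where the two slanted maps are the induced maps on fibers of the left and right squares and the horizontal map is the induced map on fibers of the outer rectangle. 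This is a direct unfolding of the formula $(a,p)\mapsto (h(a),\ct{H(a)^{-1}}{\ap{i}{p}})$ from \cref{thm:pullback}, and I expect it to be routine but slightly bookkeeping-heavy, as it amounts to tracking the concatenations of identifications for the two horizontally-pasted squares.

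For the ``if'' direction, suppose the right square is a pullback. By \cref{thm:pullback} the left slanted map is an equivalence for every $b$ (as the left square is a pullback) and the right slanted map is an equivalence for every $b$ (as the right square is a pullback). Hence the horizontal map is a composite of equivalences, so an equivalence, for every $b:B$, and a second application of \cref{thm:pullback} shows that the outer rectangle is a pullback. Note that this direction does not use surjectivity of $h$.

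For the ``only if'' direction, suppose the outer rectangle is a pullback. I want to show that for every $y:Y$ the induced map $\fib{g}{y}\to\fib{k}{i(y)}$ of the right square is an equivalence. Being an equivalence is a proposition, so by the universal property of the propositional truncation it suffices to prove this under the assumption that $y$ lies in the image of $h$; this is exactly where surjectivity enters, since it guarantees that $\fib{h}{y}$ is merely inhabited for every $y$. So assume we have $b:B$ with $h(b)=y$. Then the left slanted map is an equivalence because the left square is a pullback, and the horizontal map is an equivalence because the outer rectangle is a pullback; by the two-out-of-three property applied to the commuting triangle above, the right slanted map $\fib{g}{h(b)}\to\fib{k}{i(h(b))}$ is an equivalence. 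This is precisely the fiber map of the right square over $y=h(b)$, completing the reduction.

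The main obstacle is twofold. First, one must verify the homotopy-commutativity of the triangle of fiber maps, i.e.\ the horizontal-pasting functoriality, which requires carefully manipulating the explicit formula for the fiber map. Second, one must make the surjectivity reduction precise: transporting the equivalence established over points of the form $h(b)$ to an arbitrary $y:Y$ by eliminating the propositional truncation into the proposition ``is an equivalence.'' Everything else is a bookkeeping-free application of \cref{thm:pullback} together with the standard two-out-of-three and closure-under-composition properties of equivalences.
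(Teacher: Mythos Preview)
Your argument is correct and is precisely the fiber-wise argument the paper has in mind: the paper does not spell out a proof of \cref{thm:descent-surjective} but presents it as an immediate consequence of \cref{thm:pullback}, and your proposal supplies exactly that derivation. The two ingredients you isolate---the horizontal-pasting compatibility of the induced fiber maps, and the reduction via surjectivity using that ``is an equivalence'' is a proposition---are the expected ones, and your observation that the ``if'' direction is just pullback pasting and needs no surjectivity is also standard.
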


The main object of study in this article is a modality, of which the canonical examples are the $n$-truncations. There are many equivalent ways of saying what a modality is \cite{RijkeSpittersShulman}.

\begin{defn}
  A \define{reflective subuniverse} consists of a subuniverse $P:\UU\to\mathsf{Prop}$ equipped with:
  \begin{enumerate}
  \item a \define{modal operator} $\modal :\UU\to\UU$ such that $P(\modal X)$ holds for any $X:\UU$,
  \item a \define{modal unit} $\eta:X\to\modal X$ for each $X:\UU$, that satisfies the universal property of $\modal$-localization: the precomposition function
    \begin{equation*}
      \blank\circ\eta : (\modal X\to Y)\to (X\to Y)
    \end{equation*}
    is an equivalence for every type $Y:\UU$ such that $P(Y)$ holds.
  \end{enumerate}
  Types that satisfy the property $P$ are usually called \define{$\modal$-local}, and we write $\UU_\modal$ for the type of all $\modal$-local types.
\end{defn}

By the universal property of reflective subuniverses, it follows that for every map $f:A\to B$, there is a unique map $\modal f : \modal A \to \modal B$ such that the square
\begin{equation*}
  \begin{tikzcd}
    A \arrow[d,swap,"\eta"] \arrow[r,"f"] & B \arrow[d,"\eta"] \\
    \modal A \arrow[r,dashed,swap,"\modal f"] & \modal B
  \end{tikzcd}
\end{equation*}
commutes. This square is called the \define{$\modal$-naturality square} of $f$.

\begin{prp}\label{prp:modality}
  Given a reflective subuniverse $\modal$, the following two properties are equivalent:
  \begin{enumerate}
  \item For any family $B(x)$ of $\modal$-local types, indexed by $x$ in a $\modal$-local type $A$, the type $\sm{x:A}B(x)$ is also $\modal$-local. We also say that $\modal$ is \define{$\Sigma$-closed}, if this property holds.
  \item For any type $X$, and any family $B:\modal X\to\UU_\modal$ of $\modal$-local types, the precomposition function
    \begin{equation*}
      \blank\circ\eta : \Big(\prd{y:\modal X}B(y)\Big)\to\Big(\prd{x:X}B(\eta(x))\Big)
    \end{equation*}
    is an equivalence. We also say that $\modal$ is \define{uniquely eliminating} if this property holds.
  \end{enumerate}
  If either of these equivalent properties holds, then we say that the reflective subuniverse $\modal$ is a \define{modality}. If $\modal$ is a modality we call the $\modal$-local types \define{$\modal$-modal}.
\end{prp}

It is not the case, however, that any reflective subuniverse is a modality. For example, the subuniverse of types $X$ that are $p$-local in the sense that the precomposition map
\begin{equation*}
  \mathsf{deg}(p) : X^{\sphere{1}}\to X^{\sphere{1}}
\end{equation*}
is an equivalence, where $\mathsf{deg}(p):\sphere{1}\to\sphere{1}$ is the degree $p$-map for some prime $p$, is not a modality \cite{CORS}.

Any modality determines a \emph{stable orthogonal factorization system}, which we recall now.

\begin{defn}
  An \define{orthogonal factorization system} is a pair $(\mathcal{L},\mathcal{R})$ of classes of maps
  \begin{align*}
    \mathcal{L} & : \prd{X,Y:\UU} (X \to Y) \to \mathsf{Prop} \\
    \mathcal{R} & : \prd{X,Y:\UU} (X \to Y) \to \mathsf{Prop} 
  \end{align*}
  such that
  \begin{enumerate}
  \item Both $\mathcal{L}$ and $\mathcal{R}$ contain all equivalences and are closed under composition.
  \item Every map $f:X\to Y$ factors as a left map (i.e.~a map in $\mathcal{L}$) followed by a right map (i.e.~a map in $\mathcal{R}$). More precisely, for every map $f:X\to Y$ there is a type $\mathsf{im}_{(\mathcal{L},\mathcal{R})}(f)$ equipped with maps
    \begin{align*}
      f_{\mathcal{L}} & :X\to \mathsf{im}_{(\mathcal{L},\mathcal{R})}(f)\\
      f_{\mathcal{R}} & :\mathsf{im}_{(\mathcal{L},\mathcal{R})}(f) \to Y
    \end{align*}
    and a homotopy witnessing that the triangle
    \begin{equation*}
      \begin{tikzcd}[column sep=0]
        X \arrow[rr,"f"] \arrow[dr,swap,"f_{\mathcal{L}}"] & & Y \\
        & \mathsf{im}_{(\mathcal{L},\mathcal{R})}(f) \arrow[ur,swap,"f_{\mathcal{R}}"]
      \end{tikzcd}
    \end{equation*}
    commutes.
  \item Every map in the left class is \define{left orthogonal} to every map in the right class (we also say that every map in $\mathcal{R}$ is right orthogonal to every map in $\mathcal{L}$). Following the observations of \cite{AnelBiedermanFinsterJoyal}, this means that for any map $i:A \to B$ in $\mathcal{L}$ and any map $f:X \to Y$ in $\mathcal{R}$, the square
    \begin{equation*}
      \begin{tikzcd}
        X^B \arrow[r] \arrow[d] & Y^B \arrow[d] \\
        X^A \arrow[r] & Y^A
      \end{tikzcd}
    \end{equation*}
    is a pullback square.
  \end{enumerate}
  An orthogonal factorization system is said to be \define{stable} if the left class is stable under pullbacks. That is, for any pullback square
  \begin{equation*}
    \begin{tikzcd}
      A \arrow[r,"h"] \arrow[d,swap,"f"] & X \arrow[d,"g"] \\
      B \arrow[r,swap,"i"] & Y
    \end{tikzcd}
  \end{equation*}
  in which the map $g:X \to Y$ is in $\mathcal{L}$, it is required that $f$ is also in $\mathcal{L}$.
\end{defn}

Recall from \cite{RijkeSpittersShulman} that the stable orthogonal factorization system of a modality is obtained in the following way. First, we say that a map $f:X\to Y$ is \define{$\modal$-modal} if all its fibers are $\modal$-modal types. The class $\mathcal{R}$ is defined to be the class of $\modal$-modal maps. Second, we say that a type $X$ is \define{$\modal$-connected} if $\modal X$ is contractible. Then we say that a map $f:X\to Y$ is \define{$\modal$-connected} if all of its fibers are $\modal$-connected. The class $\mathcal{L}$ is defined to be the class of $\modal$-connected maps. The pair $(\mathcal{L},\mathcal{R})$ is the stable orthogonal factorization system of the modality $\modal$.

Conversely, we can obtain a modality from a stable factorization system, in which a type $X$ is modal if and only if the terminal projection $X\to \unit$ is in $\mathcal{R}$. The modal operator of this modality is defined as
\begin{equation*}
\modal X \defeq \mathsf{im}_{(\mathcal{L},\mathcal{R})}(X\to \unit)
\end{equation*}
and the modal unit is defined to be the left factor $X \to \modal X$ of the map $X\to \unit$. The orthogonality can be used to show that the map $\eta$ defined in this way is indeed uniquely eliminating in the sense of \cref{prp:modality}.

We recall one more useful general fact about modalities.

\begin{thm}
  For any two stable orthogonal factorization systems $(\mathcal{L},\mathcal{R})$ and $(\mathcal{L}',\mathcal{R}')$ the following are equivalent:
\begin{enumerate}
\item Every $(\mathcal{L},\mathcal{R})$-modal type is $(\mathcal{L}',\mathcal{R}')$-modal.
\item The modal units of the modality $(\mathcal{L}',\mathcal{R}')$ are in $\mathcal{L}$.
\end{enumerate}
\end{thm}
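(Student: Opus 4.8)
The plan is to translate the statement into the language of the two associated modalities. Write $\modal,\eta$ and $\modal',\eta'$ for the modalities and modal units associated to $(\mathcal{L},\mathcal{R})$ and $(\mathcal{L}',\mathcal{R}')$, respectively. Throughout I would freely use the standard facts from the modality/stable-factorization-system correspondence: that $\mathcal{R}$ is the class of maps with $\modal$-modal fibers and $\mathcal{R}'$ the class of maps with $\modal'$-modal fibers, that a type $Z$ is $(\mathcal{L},\mathcal{R})$-modal exactly when $Z\to\unit$ lies in $\mathcal{R}$, and that each unit $\eta'_X$ is by construction the left ($\mathcal{L}'$-)factor of $X\to\unit$, hence lies in $\mathcal{L}'$.

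For the direction (2)$\Rightarrow$(1), I would start from an $(\mathcal{L},\mathcal{R})$-modal type $Z$, so that $Z\to\unit$ is in $\mathcal{R}$. Assumption (2) gives $\eta'_Z\in\mathcal{L}$, so by the orthogonality axiom the pair $\eta'_Z$ and $Z\to\unit$ admits unique diagonal fillers. Filling the square with the identity on top,
\begin{equation*}
  \begin{tikzcd}
    Z \arrow[r,"\mathrm{id}"] \arrow[d,swap,"\eta'_Z"] & Z \arrow[d] \\
    \modal' Z \arrow[r] \arrow[ur,dashed,"r"] & \unit
  \end{tikzcd}
\end{equation*}
produces a retraction $r:\modal' Z\to Z$ with $r\circ\eta'_Z=\mathrm{id}_Z$. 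To upgrade this retraction to an inverse I would invoke the universal property of $\modal'$ at the $\modal'$-modal type $\modal' Z$: since $\eta'_Z\circ r$ and $\mathrm{id}_{\modal' Z}$ both restrict along $\eta'_Z$ to $\eta'_Z$, they must be equal. Hence $\eta'_Z$ is an equivalence and $Z$ is $\modal'$-modal, which is exactly (1). (Equivalently, one may quote that the $\modal'$-modal types are closed under retracts.)

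For the direction (1)$\Rightarrow$(2), the goal is to show that each unit $\eta'_X$ lies in $\mathcal{L}$. I would use the standard fact that the left class of an orthogonal factorization system is precisely the class of maps left orthogonal to every map in the right class; thus it suffices to prove that $\eta'_X$ is left orthogonal to an arbitrary $g\in\mathcal{R}$. The key step is to transfer this orthogonality to the second system: a map $g\in\mathcal{R}$ has $\modal$-modal fibers, and by (1) each such fiber is $\modal'$-modal, so $g\in\mathcal{R}'$. Since $\eta'_X\in\mathcal{L}'$, the orthogonality of $(\mathcal{L}',\mathcal{R}')$ then yields that $\eta'_X$ is left orthogonal to $g$, as required.

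I expect the direction (1)$\Rightarrow$(2) to be the main obstacle: unlike $\modal'$-modality, membership of $\eta'_X$ in $\mathcal{L}$ is not expressed by a universal property, so it cannot be read off directly from the reflector $\modal'$. The decisive manoeuvre is to reduce $\mathcal{L}$-membership to left orthogonality against the right class and then push that orthogonality into the \emph{second} factorization system, in which $\eta'_X$ is a left map by construction; hypothesis (1), read fiberwise, is exactly what guarantees the inclusion $\mathcal{R}\subseteq\mathcal{R}'$ that makes this transfer legitimate.
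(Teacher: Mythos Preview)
The paper does not supply a proof of this theorem: it is stated without proof in the Preliminaries as a recalled fact about modalities, and is immediately followed by \cref{cor:units-surjective}. So there is no argument in the paper to compare yours against.

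Your argument is correct. Both directions go through as written. For $(2)\Rightarrow(1)$ you are using orthogonality of $(\mathcal{L},\mathcal{R})$ exactly as the paper formulates it (the square $Z^{\modal' Z}\to\unit^{\modal' Z}\to\unit^{Z}\leftarrow Z^{Z}$ is a pullback, and since the $\unit$-exponentials are contractible this says precisely that precomposition by $\eta'_Z$ is an equivalence on maps into $Z$), and the retract-upgrade via the reflector $\modal'$ is standard. For $(1)\Rightarrow(2)$ the only ingredient not stated in the paper is the identification $\mathcal{L}={}^{\perp}\mathcal{R}$, but this is the usual consequence of having factorizations together with orthogonality, so it is a legitimate appeal.

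If you want a variant that stays entirely inside the language the paper has set up (connected maps and modal types rather than orthogonality classes), you can argue $(1)\Rightarrow(2)$ fiberwise: assumption (1) implies that every $\modal'$-connected type is $\modal$-connected (since $\modal$-connectedness of $A$ is detected by mapping into $\modal$-modal types, all of which are $\modal'$-modal by hypothesis), and the fibers of $\eta'_X$ are $\modal'$-connected, hence $\modal$-connected, so $\eta'_X\in\mathcal{L}$. This is the same content as your orthogonality transfer, just phrased pointwise.
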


Recall that a map is said to be \define{surjective} if all its fibers are merely inhabited. In other words, $f$ is surjective if it is in the left class of the stable factorization system for the $(-1)$-truncation. Therefore we have the following corollary.

\begin{cor}\label{cor:units-surjective}
  For any modality, every proposition is modal if and only if the modal units are surjective.
\end{cor}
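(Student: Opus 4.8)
The plan is to apply the theorem immediately preceding the corollary to a well-chosen pair of stable orthogonal factorization systems. Concretely, I would instantiate $(\mathcal{L},\mathcal{R})$ as the stable orthogonal factorization system of the $(-1)$-truncation, and $(\mathcal{L}',\mathcal{R}')$ as the stable orthogonal factorization system of the given modality $\modal$. The entire content of the corollary then lies in translating the two (equivalent) conditions of that theorem into the two sides of the desired biconditional.

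To carry out this translation I would record two identifications. First, the $(\mathcal{L},\mathcal{R})$-modal types—that is, the modal types for the $(-1)$-truncation—are exactly the propositions, since a type is $(-1)$-local precisely when it is a proposition. Second, as observed in the discussion just before the corollary, the left class $\mathcal{L}$ of the factorization system of the $(-1)$-truncation is exactly the class of surjective maps, because a map is surjective if and only if it lies in the left class of the stable factorization system of the $(-1)$-truncation.

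With these two identifications in hand, the conclusion is mechanical. Condition (1) of the theorem, namely that every $(\mathcal{L},\mathcal{R})$-modal type is $(\mathcal{L}',\mathcal{R}')$-modal, becomes the assertion that every proposition is $\modal$-modal. Condition (2) of the theorem, namely that the modal units of the modality $(\mathcal{L}',\mathcal{R}')$ lie in $\mathcal{L}$, becomes the assertion that the modal units $\eta:X\to\modal X$ are surjective. Since the theorem asserts that (1) and (2) are equivalent, we obtain exactly the stated equivalence between every proposition being modal and the modal units being surjective.

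I do not expect a genuine obstacle here: the mathematical work is entirely absorbed into the preceding theorem, and the role of the corollary is bookkeeping—recognizing the $(-1)$-truncation as the modality whose modal types are the propositions and whose associated left class is the surjections. The only point deserving a moment's care is the identification of the $(-1)$-modal types with the propositions, which follows directly from unwinding $(-1)$-locality.
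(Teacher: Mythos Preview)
Your proposal is correct and is exactly the argument the paper intends: the corollary is stated immediately after recalling that surjective maps are precisely the left class of the stable factorization system for the $(-1)$-truncation, and no separate proof is given. Your instantiation of the preceding theorem with $(\mathcal{L},\mathcal{R})$ the $(-1)$-truncation's factorization system and $(\mathcal{L}',\mathcal{R}')$ that of $\modal$ is the intended reading.
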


\section{\texorpdfstring{$\modal$}{○}-\'etale maps}

\begin{defn}
We say that a map $f:A\to B$ is \define{$\modal$-\'etale}, if the square
\begin{equation*}
\begin{tikzcd}
A \arrow[r,"f"] \arrow[d,swap,"\eta"] & B \arrow[d,"\eta"] \\
\modal A \arrow[r,swap,"\modal f"] & \modal B
\end{tikzcd}
\end{equation*}
is a pullback square. We will write $\isetale(f)$ for this proposition. In the special case where the modality $\modal$ is the $n$-truncation, we will say that a map is \define{$n$-\'etale} if it is $\modal$-\'etale.
\end{defn}

Using the fact that $\modal$ preserves equivalences and composition up to homotopy,
it is immediate from the definition that any equivalence is $\modal$-\'etale, and that the $\modal$-\'etale maps are closed under composition.

\begin{eg}\label{eg:etale_prop}
  We claim that a map $f:A\to B$ is $(-1)$-\'etale if and only if it satisfies the condition
  \begin{equation*}
    A\to \isequiv(f).
  \end{equation*}
  Examples of maps that satisfy this condition include equivalences, maps between propositions, and any map of the form $\emptyt\to B$.

To see that if $f:A\to B$ is $\modal$-\'etale, then $A\to\isequiv(f)$, consider the pullback square
\begin{equation*}
\begin{tikzcd}
A \arrow[r] \arrow[d,swap,"f"] & \brck{A} \arrow[d,"\brck{f}"] \\
B \arrow[r] & \brck{B},
\end{tikzcd}
\end{equation*}
and let $a:A$. Then both $\brck{A}$ and $\brck{B}$ are contractible, so $\brck{f}:\brck{A}\to\brck{B}$ is an equivalence. Since equivalences are stable under pullback it follows that $f$ is an equivalence.

Now suppose that $A\to \isequiv(f)$. Since $\isequiv(f)$ is a proposition, we also have $\brck{A}\to\isequiv(f)$. To see that the gap map
\begin{equation*}
A \to B\times_{\brck{B}}\brck{A}
\end{equation*}
is an equivalence, we will show that its fibers are contractible. Let $b:B$, $x:\brck{A}$ and $p:\bproj{b}=\brck{f}(x)$. Since $\brck{A}\to\isequiv(f)$, it follows that $f$ is an equivalence. Then $\brck{f}$ is also an equivalence, from which it follows that the naturality square is a pullback square. We conclude that the fibers of the gap map are contractible. 
\end{eg}

We saw in the above example that any map between propositions is $\brck{\blank}$-\'etale. This fact generalizes to all modalities.

\begin{lem}\label{lem:etale_modal}
Any map between $\modal$-modal types is $\modal$-\'etale.
\end{lem}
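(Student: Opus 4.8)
The plan is to reduce the claim to the elementary observation that a commuting square in which both \emph{vertical} maps are equivalences is automatically a pullback square. For a map $f:A\to B$ between $\modal$-modal types, the two vertical maps of its $\modal$-naturality square are the units $\eta:A\to\modal A$ and $\eta:B\to\modal B$, so it suffices to argue that both of these are equivalences and then invoke \cref{thm:pullback}.

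First I would establish that the unit $\eta:X\to\modal X$ of any $\modal$-modal type $X$ is an equivalence. This is the only place the hypothesis is used. Since $X$ is $\modal$-local, the universal property of $\modal$-localization with codomain $X$ says that precomposition by $\eta$ is an equivalence $(\modal X\to X)\to(X\to X)$. The identity function on $X$ is therefore hit by a unique $r:\modal X\to X$, giving a retraction $r\circ\eta\htpy\mathrm{id}_X$. Applying the universal property once more, now with the modal codomain $\modal X$, both $\eta\circ r$ and $\mathrm{id}_{\modal X}$ become $\eta$ after precomposition with $\eta$ (using $r\circ\eta\htpy\mathrm{id}_X$), so $\eta\circ r\htpy\mathrm{id}_{\modal X}$ by injectivity of that precomposition map. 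Hence $\eta$ is an equivalence.

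With both units known to be equivalences, I would conclude via \cref{thm:pullback}: applied to the naturality square, it suffices to check that for each $b:\modal A$ the induced map on fibers $\fib{\eta_A}{b}\to\fib{\eta_B}{(\modal f)(b)}$ is an equivalence. Since $\eta_A$ and $\eta_B$ are equivalences, all of their fibers are contractible, and any map between contractible types is an equivalence. Therefore the square is a pullback and $f$ is $\modal$-\'etale.

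The only real content is the first step, that units of modal types are equivalences; once that standard fact is in hand the remainder is formal. I expect no genuine obstacle here, though one must phrase the ``two parallel equivalences force a pullback'' principle precisely—either through the fiberwise criterion of \cref{thm:pullback} as above, or by directly exhibiting the gap map $A\to\modal A\times_{\modal B}B$ as an equivalence.
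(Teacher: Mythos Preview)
Your proof is correct and follows essentially the same approach as the paper: both argue that the units at $A$ and $B$ are equivalences and conclude that the naturality square is a pullback. The paper's proof simply asserts these two facts in one sentence, whereas you spell out why units of modal types are equivalences and why two parallel equivalences yield a pullback via the fiberwise criterion of \cref{thm:pullback}; this extra detail is fine but not a different strategy.
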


\begin{proof}
Suppose $f:X\to Y$ is a map between $\modal$-modal types. Then the top and bottom maps in the square
\begin{equation*}
\begin{tikzcd}
X \arrow[r] \arrow[d] & \modal X \arrow[d] \\
Y \arrow[r] & \modal Y
\end{tikzcd}
\end{equation*}
are equivalences. Therefore this square is a pullback square, so $f$ is $\modal$-\'etale.
\end{proof}

\begin{rmk}
  If the modality $\modal$ is lex, then it follows from property (viii) in Theorem 3.1 of \cite{RijkeSpittersShulman} that for any $\modal$-modal map $f:A\to B$, the evident map
  \begin{equation*}
    \fib{f}{b}\to\fib{\modal f}{\eta(b)}
  \end{equation*}
  is an equivalence, because it is a $\modal$-connected map between $\modal$-modal types. Therefore we conclude by \cref{thm:pullback} that the square
  \begin{equation*}
    \begin{tikzcd}
      A \arrow[d,swap,"f"] \arrow[r,"\eta"] & \modal A \arrow[d,"\modal f"] \\
      B \arrow[r,swap,"\eta"] & \modal B
    \end{tikzcd}
  \end{equation*}
  is a pullback square. In other words, if the modality $\modal$ is lex, then any $\modal$-modal map is $\modal$-\'etale. The converse holds without assuming that the modality $\modal$ is lex: if $f$ is a base change of $\modal f$, then the fibers of $f$ are $\modal$-modal because the fibers of $\modal f$ are.
\end{rmk}

Our goal in this section is to show that a map is $n$-\'etale, i.e., \'etale for the $n$-truncation, if and only if it is right orthogonal to the point inclusion $\unit\to\sphere{n+1}$. We will use $\modal$-disks in our proof, which we recall from \cite{wellen-thesis}.

\begin{defn}
  Let $\modal$ be a modality, and let $a:A$. The \define{$\modal$-disk} $D^{\modal}(A,a)$ of $A$ at $a$ is defined by
  \begin{equation*}
    D^\modal(A,a)\defeq \sm{x:A}\eta(a)=\eta(x).
  \end{equation*}
  In the special case where $\modal$ is the $n$-truncation, we write $D^n(A,a)$ for the $\trunc{n}{\blank}$-disk at $a$ and if the modality is clear from the context, we allow ourselves to drop ``$\modal$'' from the notation and write just $D(A,a)$.
\end{defn}

Note that the $\modal$-disk fits in a fiber sequence
\begin{equation*}
  \begin{tikzcd}
    D^\modal(A,a) \arrow[r,hook] & A \arrow[r,->>] & \modal(A).
  \end{tikzcd}
\end{equation*}
Moreover, we observe that the $\modal$-disk is $\modal$-connected, since the modal unit $\eta:A\to\modal(A)$ is a $\modal$-connected map. Therefore the $\modal$-disk is also known as the \define{$\modal$-connected cover} of $A$ at $a$.

We also recall the notion of $\modal$-disk bundle from \cite{wellen-thesis}.

\begin{defn}
  For any type $A$, we define the \define{$\modal$-disk bundle}
  \begin{equation*}
    T^\modal A \defeq \sm{x:A}D^\modal(A,x).
  \end{equation*}
\end{defn}

Note that the $\modal$-disk bundle fits in a pullback square
\begin{equation*}
  \begin{tikzcd}
    T^\modal A \arrow[r] \arrow[d] & A \arrow[d] \\
    A \arrow[r] & \modal A.
  \end{tikzcd}
\end{equation*}

Note that $D^\modal$ and $T^\modal$ act functorially: given a map $f:A\to B$ and a point $a:A$, we obtain a map
\begin{equation*}
  D^\modal(f,a) : D^\modal(A,a)\to D^\modal(B,f(a)).
\end{equation*}
This family of maps induces a map $T^\modal f:T^\modal A \to T^\modal B$ for which the square
\begin{equation*}
  \begin{tikzcd}[column sep=large]
    T^\modal A \arrow[r,"T^\modal f"] \arrow[d,swap,"\proj 1"] & T^\modal B \arrow[d,"\proj 1"] \\
    A \arrow[r,swap,"f"] & B
  \end{tikzcd}
\end{equation*}
commutes.

\begin{prp}\label{lem:etale_char}
  Let $\modal$ be a modality and $f:A\to B$ any map, and consider the following two statements:
  \begin{enumerate}
  \item The map $f$ is $\modal$-\'etale.
  \item The square
    \begin{equation*}
      \begin{tikzcd}
        T^\modal A \arrow[r,"T^\modal f"] \arrow[d,swap,"\proj 1"] & T^\modal B \arrow[d,"\proj 1"] \\
        A \arrow[r,swap,"f"] & B
      \end{tikzcd}
    \end{equation*}
    is a pullback square.
  \end{enumerate}
  We have (i) implies (ii). Moreover, if $\eta_A:A\to\modal A$ is surjective, then (ii) implies (i).
\end{prp}

\begin{proof}
  By \cref{thm:pullback} the square
  \begin{equation*}
    \begin{tikzcd}
      A \arrow[r,"f"] \arrow[d,swap,"\eta_A"] & B \arrow[d,"\eta_B"] \\
      \modal A \arrow[r,swap,"\modal f"] & \modal B
    \end{tikzcd}
  \end{equation*}
  is a pullback square if and only if the induced map on fibers
  \begin{equation*}
    \fib{\eta_A}{x}\to \fib{\eta_B}{\modal f(x)}
  \end{equation*}
  is an equivalence for each $x:\modal A$. Thus we see that if $f$ is $\modal$-\'etale, then the map
  \begin{equation*}
    D^\modal(f,x) : D^\modal(A,x)\to D^\modal(B,f(x))
  \end{equation*}
  is an equivalence for each $x:A$. By \cref{thm:pullback} it follows that (ii) is equivalent to the property that each $D^\modal(f,x)$ is an equivalence. This proves that (i) implies (ii). Furthermore, if $\eta_A$ is surjective, then the property that each $D^\modal(f,x)$ is an equivalence is equivalent to the property that each $\fib{\eta_A}{x}\to \fib{\eta_B}{\modal f(x)}$ is an equivalence. This proves that (ii) implies (i) in the case where $\eta_A$ is surjective.
\end{proof}

\begin{eg}\label{rmk:-1etale}
In the special case of $(-1)$-truncation, the characterization of \cref{lem:etale_char} (ii) asserts that a map $f:A\to B$ is $(-1)$-\'etale if and only if the square
\begin{equation*}
\begin{tikzcd}
A\times A \arrow[d,swap,"\proj 1"] \arrow[r,"{f\times f}"] & B\times B \arrow[d,"\proj 1"] \\
A \arrow[r,swap,"f"] & B
\end{tikzcd}
\end{equation*}
is a pullback square. Phrased differently, we see that a map is $(-1)$-\'etale if and only if the square
\begin{equation*}
\begin{tikzcd}
A^{\sphere{0}} \arrow[d,swap,"\mathsf{ev}_\ast"] \arrow[r,"f^{\sphere{0}}"] & B^{\sphere{0}} \arrow[d,"\mathsf{ev}_\ast"] \\
A \arrow[r,swap,"f"] & B
\end{tikzcd}
\end{equation*}
is a pullback square. In other words, $f$ is $(-1)$-\'etale if and only if $f$ is right orthogonal to the base point inclusion $\unit\to\sphere{0}$. 
\end{eg}

\begin{eg}
By \cref{lem:etale_char} and the fact that $\eqv{(\tproj{0}{a}=\tproj{0}{x})}{\brck{a=x}}$, it follows that $f$ is $0$-\'etale if and only if the square
\begin{equation*}
\begin{tikzcd}[column sep=huge]
\sm{a,x:A}\brck{a=x} \arrow[d,swap,"\proj 1"] \arrow[r,"\total{\brck{\apfunc{f}}}"] & \sm{b,y:B}\brck{b=y} \arrow[d,"\proj 1"] \\
A \arrow[r,swap,"f"] & B
\end{tikzcd}
\end{equation*}
is a pullback square. Furthermore, by \cref{thm:pullback} this square is a pullback if and only if the induced map
\begin{equation*}
\Big(\sm{x:A}\brck{a=x}\Big)\to\Big(\sm{y:B}\brck{f(a)=y}\Big)
\end{equation*}
is an equivalence, for each $a:A$.

We note that a map $f:A\to B$ between pointed connected types is an equivalence if and only if it is an embedding, which happens if and only if $f^{\sphere{1}}:A^{\sphere{1}} \to B^{\sphere{1}}$ is an equivalence. We can use this fact to conclude that a map is $0$-connected if and only if the square
\begin{equation*}
  \begin{tikzcd}
    A^{\sphere{1}} \arrow[r] \arrow[d] & B^{\sphere{1}} \arrow[d] \\
    A \arrow[r] & B
  \end{tikzcd}
\end{equation*}
is a pullback square. Therefore we see that a map $f$ is $0$-\'etale if and only if it is right orthogonal to the base point inclusion $\unit\to\sphere{1}$.
\end{eg}

These examples suggest the following theorem.

\begin{thm}
  \label{thm:char_n_etale}
  For any map $f:A\to B$ and any $n\geq -1$, the following are equivalent:
  \begin{enumerate}
  \item The map $f$ is $n$-\'etale.
  \item The map $f$ is right orthogonal to the base point inclusion $\unit\to\sphere{n+1}$.
  \end{enumerate}
\end{thm}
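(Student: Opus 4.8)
The plan is to reduce each of the two conditions to a statement about iterated loop spaces and then to compare these; throughout, let $\modal$ denote the $n$-truncation modality. On the étale side, the proof of \cref{lem:etale_char} together with \cref{thm:pullback} tells us that $f$ is $n$-étale precisely when the $\modal$-naturality square is a pullback, i.e.\ when the induced map $\fib{\eta_A}{x}\to\fib{\eta_B}{\modal f(x)}$ is an equivalence for every $x:\modal A$. Since $n\geq -1$, every proposition is $n$-modal, so by \cref{cor:units-surjective} the unit $\eta_A$ is surjective; because being an equivalence is a proposition, it suffices to consider $x$ of the form $\eta(a)$. As $\fib{\eta_A}{\eta(a)}\simeq D^n(A,a)$, we conclude that $f$ is $n$-étale if and only if $D^n(f,a):D^n(A,a)\to D^n(B,f(a))$ is an equivalence for every $a:A$.

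On the orthogonality side, $f$ is right orthogonal to $\unit\to\sphere{n+1}$ exactly when the square
\begin{equation*}
\begin{tikzcd}
A^{\sphere{n+1}} \arrow[r,"f^{\sphere{n+1}}"] \arrow[d,swap,"\mathsf{ev}_\ast"] & B^{\sphere{n+1}} \arrow[d,"\mathsf{ev}_\ast"] \\
A \arrow[r,swap,"f"] & B
\end{tikzcd}
\end{equation*}
is a pullback, where I have used $A^\unit\simeq A$ and $\mathsf{ev}_\ast$ is evaluation at the base point. By \cref{thm:pullback} this holds if and only if the map on fibers is an equivalence over every $a:A$. The fiber of $\mathsf{ev}_\ast$ over $a$ is the type of pointed maps $(\sphere{n+1},\ast)\to(A,a)$, which is $\Omega^{n+1}(A,a)$, and the induced map on fibers is $\Omega^{n+1}(f)$. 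Hence right orthogonality to $\unit\to\sphere{n+1}$ is equivalent to $\Omega^{n+1}(f):\Omega^{n+1}(A,a)\to\Omega^{n+1}(B,f(a))$ being an equivalence for every $a:A$.

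It remains to connect the two per-point conditions, which is the heart of the argument: for a fixed $a$, the map $D^n(f,a)$ is an equivalence if and only if $\Omega^{n+1}(f)$ is. The key tool is the following loop lemma: a pointed map $g:X\to Y$ between $0$-connected types is an equivalence as soon as $\Omega g$ is. Indeed, being an embedding is a proposition, and since $X\times X$ is $0$-connected it suffices to check that $\apfunc{g}$ is an equivalence at the base point, where it is $\Omega g$; thus $g$ is an embedding, so its fibers are propositions, and since the base-point fiber is inhabited and $\iscontr(\fib{g}{y})$ is a proposition in $y$ over the $0$-connected type $Y$, all fibers are contractible. Iterating this lemma and using that $\Omega^{k}D^n(A,a)$ is $(n-k)$-connected — hence $0$-connected for $k\leq n$ — we obtain that $D^n(f,a)$ is an equivalence if and only if $\Omega^{n+1}D^n(f,a)$ is. Finally, since $\sphere{n+1}$ is $n$-connected, every pointed map out of it into $A$ factors uniquely through the $n$-connected cover $D^n(A,a)$, giving a natural equivalence $\Omega^{n+1}D^n(A,a)\simeq\Omega^{n+1}(A,a)$ under which $\Omega^{n+1}D^n(f,a)$ becomes $\Omega^{n+1}(f)$. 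Combining the three reductions yields the theorem.

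The main obstacle is the loop lemma and the connectivity bookkeeping in its iteration: one has to be sure the $(n+1)$-fold delooping can be carried out, which is exactly why the $\modal$-disks $D^n(A,a)$ — being $n$-connected — are the correct intermediate objects rather than $A$ itself, since between general types an equivalence cannot be detected on loop spaces. The base case $n=-1$ is degenerate, as the iteration is then empty; it is already recorded in \cref{rmk:-1etale}, where $(-1)$-étale maps were identified with those right orthogonal to $\unit\to\sphere{0}$.
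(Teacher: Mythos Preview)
Your proof is correct and takes a somewhat different route than the paper's.

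Both arguments agree on the outer layer: the \'etale condition is rephrased, via \cref{lem:etale_char} and surjectivity of the units, as ``$D^n(f,a)$ is an equivalence for every $a$'', and the orthogonality condition is rephrased as ``$\Omega^{n+1}(f)$ is an equivalence for every $a$''. Both also use the same final ingredient, namely that post-composition with the inclusion $D^n(A,a)\hookrightarrow A$ induces an equivalence on pointed maps out of $\sphere{n+1}$; you deduce this from the fiber sequence $D^n(A,a)\to A\to\trunc{n}{A}$ and the contractibility of $\Omega^{n+1}\trunc{n}{A}$, while the paper computes the fibers of this post-composition map by hand.

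Where the proofs diverge is in the bridge between the two per-point conditions. The paper treats the two implications asymmetrically: for (i)$\Rightarrow$(ii) it runs a cube argument exploiting that $(\trunc{n}{A})^{\sphere{n+1}}\to\trunc{n}{A}$ is an equivalence, and for (ii)$\Rightarrow$(i) it argues that $D^n(f,a)$ is automatically $(n-1)$-connected and then shows it is also $(n-1)$-truncated by checking that $(D^n(f,a))^{\sphere{n+1}}$ is an equivalence. Your route is symmetric: you isolate a clean ``loop lemma'' (a pointed map between $0$-connected types is an equivalence iff its loop map is) and iterate it $n{+}1$ times along the tower of $(n{-}k)$-connected loop spaces of the $\modal$-disks. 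This packages the connectivity bookkeeping into a single reusable statement and yields both directions at once; the paper's cube argument for (i)$\Rightarrow$(ii), on the other hand, is shorter and avoids any induction. Either way the $n=-1$ case is handled separately via \cref{rmk:-1etale}.
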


\begin{rmk}
  For $n\jdeq -2$ the statement does not make sense, since there is no base point inclusion $\unit\to\sphere{-1}$. On the other hand, the $(-2)$-\'etale maps are easily characterized: a map is $(-2)$-\'etale if and only if it is an equivalence.
\end{rmk}

\begin{proof}
  The case of $n\jdeq -1$ is already covered in \cref{rmk:-1etale}, so we assume that $n$ is at least $0$. Furthermore, recall that $f$ is right orthogonal to $\unit\to\sphere{n+1}$ if and only if the commuting square
    \begin{equation}\label{eq:orth}
      \begin{tikzcd}
        A^{\sphere{n+1}} \arrow[r] \arrow[d] & B^{\sphere{n+1}} \arrow[d] \\
        A \arrow[r] & B
      \end{tikzcd}
    \end{equation}
    is a pullback square.

  For the forward direction, suppose $f:A\to B$ is $n$-\'etale, and consider the commuting cube
\begin{equation*}
\begin{tikzcd}
&[-1ex] A^{\sphere{n+1}} \arrow[dl] \arrow[d] \arrow[dr] &[2ex] \\
\trunc{n}{A}^{\sphere{n+1}} \arrow[d] & B^{\sphere{n+1}} \arrow[dl] \arrow[dr] & A \arrow[dl,crossing over] \arrow[d] \\
\trunc{n}{B}^{\sphere{n+1}} \arrow[dr] & \trunc{n}{A} \arrow[d] \arrow[from=ul,crossing over] & B \arrow[dl] \\
& \trunc{n}{B}
\end{tikzcd}
\end{equation*}
In this cube the front right square is a pullback square by the assumption that $f$ is $n$-\'etale. The back left square is an exponent of this pullback square, so it is again a pullback. The front left square is a pullback square because its top and bottom map are both equivalences. Therefore we conclude that the back right square is a pullback square, which shows that $f$ is right orthogonal to the map $\unit\to\sphere{n+1}$.

  For the converse, suppose that the square in \cref{eq:orth} is a pullback square. This square is equivalent to the square
  \begin{equation*}
    \begin{tikzcd}
      \sm{x:A}\mathsf{Map}_\ast(\sphere{n},\loopspace{A,x}) \arrow[r] \arrow[d] & \sm{y:B} \mathsf{Map}_\ast(\sphere{n},\loopspace{B,y}) \arrow[d] \\
      A \arrow[r] & B,
    \end{tikzcd}
  \end{equation*}
  so we see that this is a pullback square, and by \cref{thm:pullback} it follows that the map
  \begin{equation*}
    \mathsf{Map}_\ast(\sphere{n},\loopspace{f,x}):\mathsf{Map}_\ast(\sphere{n},\loopspace{A,x}) \to \mathsf{Map}_\ast(\sphere{n},\loopspace{B,f(x)})
  \end{equation*}
  of pointed mapping spaces is an equivalence, for each $x:A$.

  Our goal is to show that $f$ is $n$-\'etale. By \cref{lem:etale_char} it is equivalent to show that the square
  \begin{equation*}
    \begin{tikzcd}
      A\times_{\trunc{n}{A}} A \arrow[r] \arrow[d] & B \times_{\trunc{n}{B}} B \arrow[d] \\
      A \arrow[r] & B
    \end{tikzcd}
  \end{equation*}
  is a pullback square. By \cref{thm:pullback} this is equivalent to showing that the induced map
  \begin{equation*}
    D^{n}(f,x):D^{n}(A,x)\to D^{n}(B,f(x))
  \end{equation*}
  on $\modal$-disks is an equivalence for each $x:A$. We note that the $\modal$-disks are fibers of the unit $\eta:A \to \trunc{n}{A}$, so they are $n$-connected. It follows immediately that the map $D^{n}(f,x)$ is $(n-1)$-connected. Therefore it suffices to show that $D^{n}(f,x)$ is an $(n-1)$-truncated map.

  Recall that a map $\varphi$ between ($0$-)connected types is $(n-1)$-truncated if and only if $\varphi^{\sphere{n+1}}$ is an equivalence. Using our assumption that $n\geq 0$ we know that the $\modal$-disks under consideration are at least connected. Therefore it suffices to show that $(D^{n}(f,x))^{\sphere{n+1}}$ is an equivalence. Now we observe that the square
  \begin{equation*}
    \begin{tikzcd}
      \mathsf{Map}_\ast(\sphere{n+1},D^{n}(A,x)) \arrow[r] \arrow[d] & \mathsf{Map}_\ast(\sphere{n+1},D^{n}(B,f(x))) \arrow[d] \\
      \mathsf{Map}_\ast(\sphere{n+1},(A,x)) \arrow[r] & \mathsf{Map}_\ast(\sphere{n+1},(B,f(x)))
    \end{tikzcd}
  \end{equation*}
  commutes. In this square, the bottom map is an equivalence by the suspension-loop space adjunction, and the fact that $\mathsf{Map}_\ast(\sphere{n},\loopspace{f,x})$ is an equivalence. Therefore it suffices to show that both vertical maps are equivalences, i.e., that any map of the form
  \begin{equation*}
    \proj 1\circ\blank : \mathsf{Map}_\ast(\sphere{n+1},D^{n}(A,x))\to \mathsf{Map}_\ast(\sphere{n+1},(A,x))
  \end{equation*}
  is an equivalence. To see this, we use that $D^n(A,x)$ is equivalent to the type $\sm{y:A}\trunc{n-1}{x=y}$. Therefore it follows that the fiber of the above post-composition map at $(h,\alpha):\mathsf{Map}_\ast(\sphere{n+1},(A,x))$ is equivalent to the type
  \begin{equation*}
    \sm{g:\prd{t:\sphere{n+1}}\trunc{n-1}{h(t)=x}}g(\ast)=\eta(\alpha).
  \end{equation*}
  Here $\alpha$ is the identification $h(\ast)=x$ witnessing that $h$ is a base-point preserving map. However, since $g$ is a dependent function from the $(n+1)$-sphere into a family of $(n-1)$-types, it follows by the dependent universal property of $\sphere{n+1}$ that the type above is equivalent to the type
  \begin{equation*}
    \sm{\beta:\trunc{n-1}{h(\ast)=x}}\beta=\eta(\alpha),
  \end{equation*}
  which is clearly contractible. Therefore we see that the post-composition map $\proj 1\circ\blank$ has contractible fibers, so we conclude that it is an equivalence.
\end{proof}

The proof of \cref{thm:char_n_etale} uses the suspension-loop space adjunction, so it doesn't seem to be directly generalizable to arbitrary accessible modalities. For instance, it would be interesting to know whether a \'etale maps for the nullification modality at an arbitrary pointed type can be characterized in a similar way.

\section{Locally trivial maps}

  In this section we consider a map $f:A\to B$, and write
  \begin{align*}
    F_y & \defeq \fib{\modal f}{y} \\
    D_y & \defeq \fib{\eta}{y}
  \end{align*}
  for any $y:\modal B$. The type $D_y$ can be thought of as a $\modal$-disk, except that it is not centered at a point in $B$.

  Recall that $f$ can be seen as a fibration, of which the fiber at $b:B$ is the type $\fib{f}{b}$. We will show that the condition of being $\modal$-\'etale is related to the condition that the fibration $f$ is trivial on the types $D_y$. We define this condition more precisely as follows.
  
\begin{defn}
 We say that $f$ is \define{$\modal$-locally trivial} if for each $y:\modal B$ there is a map $\varphi_y:F_y\times D_y \to A$ such that the cube
  \begin{equation*}
    \begin{tikzcd}
      & F_y\times D_y \arrow[dl,swap,"\proj 1"] \arrow[d,dashed,"\varphi_y"] \arrow[dr,"\proj 2"] \\
      F_y \arrow[d] & A \arrow[dl] \arrow[dr] & D_y \arrow[d] \arrow[dl,crossing over] \\
      \modal A \arrow[dr] & \unit \arrow[from=ul,crossing over] \arrow[d,"y"] & B \arrow[dl] \\
      & \modal B
    \end{tikzcd}
  \end{equation*}
  commutes, and the back-right square is a pullback square.
\end{defn}

By the assumption that the square
\begin{equation*}
  \begin{tikzcd}
    F_y\times D_y \arrow[d,swap,"\proj 2"] \arrow[r,"\varphi_y"] & A \arrow[d,"f"] \\
    D_y \arrow[r,swap,"i"] & B
  \end{tikzcd}
\end{equation*}
is a pullback square, we see that a $\modal$-locally trivial map is a map that becomes a trivial fibration when it is restricted to a $\modal$-disk. Indeed, with \cref{thm:pullback} we obtain from this pullback square a family of equivalences
\begin{equation*}
  F_y \simeq \fib{f}{i(z)}
\end{equation*}
indexed by $z:D_y$, where $i:D_y\to B$ is the fiber inclusion of the (unpointed) fiber sequence $D_y\hookrightarrow B\twoheadrightarrow\modal B$. The commutativity of the cube implies that the map $\varphi_y$ is uniquely determined, as we will soon see.

\begin{prp}\label{prp:locally_trivial}
  The following are equivalent:
  \begin{enumerate}
  \item The map $f$ is $\modal$-\'etale.
  \item The map $f$ is $\modal$-locally trivial.
  \end{enumerate}
\end{prp}

\begin{proof}
  Suppose that $f$ is $\modal$-\'etale, and for arbitrary $y:\modal B$ consider the cube
    \begin{equation*}
    \begin{tikzcd}
      & F_y\times D_y \arrow[dl] \arrow[d,dashed,"\varphi_y"] \arrow[dr] & \phantom{\modal A}\\
      F_y \arrow[d] & A \arrow[dl] \arrow[dr] & D_y \arrow[d] \arrow[dl,crossing over] \\
      \modal A \arrow[dr] & \unit \arrow[from=ul,crossing over] \arrow[d] & B \arrow[dl] \\
      & \modal B.
    \end{tikzcd}
  \end{equation*}
  In this cube, the map $\varphi_y$ is the unique map such that the cube commutes, obtained from the assumption that the bottom square is a pullback square. Now observe that the bottom, front-left, front-right, and top squares are all pullback squares. Therefore it follows immediately that the remaining squares are pullback squares. Hence $f$ is $\modal$-locally trivial.

  Now assume that $f$ is $\modal$-locally trivial, and consider the commuting cube
  \begin{equation*}
    \begin{tikzcd}[column sep=tiny]
      & \sm{y:\modal B}F_y\times D_y \arrow[dl] \arrow[d] \arrow[dr] & \phantom{\modal A} \\
      \sm{y:\modal B}F_y \arrow[d] & A \arrow[dl] \arrow[dr] & \sm{y:\modal B}D_y \arrow[dl,crossing over] \arrow[d] \\
      \modal A \arrow[dr] & \sm{y:\modal B}\unit \arrow[from=ul,crossing over] \arrow[d] & B \arrow[dl] \\
      & \modal B
    \end{tikzcd}
  \end{equation*}
  Using the descent theorem of $\Sigma$-types (\cref{thm:descent-sigma}) and the assumption that $f$ is $\modal$-locally trivial, we see that the back-right square is a pullback square. We also note that the vertical maps on the left, right and in the front are equivalences. Moreover, we observe that the top square and the front-left square are pullback squares. Therefore it follows that the rectangle 
  \begin{equation*}
    \begin{tikzcd}
      \sm{y:\modal B} F_y\times D_y \arrow[r] \arrow[d] & A \arrow[d,"f"] \arrow[r] & \modal A \arrow[d,"\modal f"] \\
      \sm{y:\modal B}D_y \arrow[r] & B \arrow[r] & \modal B 
    \end{tikzcd}
  \end{equation*}
  consisting of the back-right square and the bottom square in the cube, is a pullback square. Since the map $\sm{y:\modal B}D_y\to B$ is an equivalence, and in particular surjective, we use the descent theorem for surjective maps (\cref{thm:descent-surjective}) to conclude that the square on the right is a pullback square, i.e., that $f$ is $\modal$-\'etale.
\end{proof}

\begin{cor}
  Being $\modal$-locally trivial is a property.
\end{cor}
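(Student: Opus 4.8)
The plan is to show that the type expressing that $f$ is $\modal$-locally trivial — write $\mathsf{LT}(f)$ for it — is a proposition, by exhibiting a map $\mathsf{LT}(f)\to\isprop(\mathsf{LT}(f))$; this suffices, since for any type $T$ one has $(T\to\isprop(T))\to\isprop(T)$. In other words, I would prove that $\mathsf{LT}(f)$ is a proposition \emph{under the assumption that it is inhabited}. So the whole argument runs under the hypothesis that $f$ is $\modal$-locally trivial, and by \cref{prp:locally_trivial} this hypothesis immediately upgrades to $\isetale(f)$: the $\modal$-naturality square of $f$ is a pullback square. It is precisely this pullback property that I will use to rigidify the local trivialization data.

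Next I would unfold $\mathsf{LT}(f)=\prod_{y:\modal B}S_y$, where $S_y$ packages the map $\varphi_y:F_y\times D_y\to A$, the homotopies filling the two faces of the cube that contain the edge $\varphi_y$, and a proof that the back-right square is a pullback. Since propositions are closed under $\prod$, it is enough to show that each $S_y$ is a proposition. The key observation is that the pair consisting of $\varphi_y$ and its cube-commuting homotopies is exactly a factorization, through the gap map $A\to\modal A\times_{\modal B}B$ of the naturality square, of the canonical cone $F_y\times D_y\to\modal A\times_{\modal B}B$ assembled from the fiber inclusions $F_y\hookrightarrow\modal A$ and $D_y\hookrightarrow B$ (these are compatible over $\modal B$ because both composites to $\modal B$ are constant at $y$). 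Under the standing hypothesis that $f$ is $\modal$-\'etale, this gap map is an equivalence, so the type of such factorizations is contractible; this is the uniqueness of $\varphi_y$ already anticipated in the discussion preceding \cref{prp:locally_trivial}.

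With $\varphi_y$ and its commuting data pinned down to a contractible choice, $S_y$ reduces to the single remaining condition that the back-right square is a pullback, which is a proposition by \cref{thm:pullback}. Hence each $S_y$ is a proposition, $\mathsf{LT}(f)=\prod_{y:\modal B}S_y$ is a proposition, and we have produced the desired map $\mathsf{LT}(f)\to\isprop(\mathsf{LT}(f))$, completing the proof. I expect the main obstacle to be the bookkeeping in the middle step: faithfully translating the informal phrase ``the cube commutes'' into an honest factorization through the gap map, including the coherence relating the two face-homotopies to the homotopies over $\modal B$, so that one may legitimately invoke contractibility of the space of factorizations through an equivalence. Everything else — closure of propositions under $\prod$ and the fact that being a pullback square is a proposition — is routine.
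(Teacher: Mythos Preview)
Your proposal is correct and follows essentially the same approach as the paper: assume $f$ is $\modal$-locally trivial, upgrade to $\modal$-\'etale via \cref{prp:locally_trivial}, and then use that the naturality square is a pullback to conclude that the data of $\varphi_y$ together with its cube-commuting homotopies is contractible. The paper's proof is just a terse version of yours, omitting the explicit $(T\to\isprop(T))\to\isprop(T)$ reduction and the decomposition into a product over $y:\modal B$.
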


\begin{proof}
  Since a map is $\modal$-\'etale whenever it is $\modal$-locally trivial, it follows that the type of maps $\varphi_y:F_y\times D_y\to A$ such that the cube commutes is contractible, whenever $f$ is $\modal$-locally trivial.
\end{proof}

\section{Modal descent}\label{section:modal-descent}

\begin{prp}\label{thm:etale_flattening}
Consider a pullback square
\begin{equation*}
\begin{tikzcd}
E' \arrow[d,swap,"{p'}"] \arrow[r,"g"] & E \arrow[d,"p"] \\
B' \arrow[r,swap,"f"] & B
\end{tikzcd}
\end{equation*}
in which we assume that $E$ and $B$ are modal types. Then the square
  \begin{equation*}
    \begin{tikzcd}
      \modal E' \arrow[r,"\tilde{g}"] \arrow[d,swap,"{\modal p'}"] & E \arrow[d,"p"] \\
      \modal B' \arrow[r,swap,"\tilde{f}"] & B,
    \end{tikzcd}
  \end{equation*}
  where $\tilde{f}$ and $\tilde{g}$ are the unique extensions of $f$ and $g$ along the modal units of $B'$ and $E'$, is also a pullback square.
\end{prp}

\begin{proof}
Consider the diagram
\begin{equation*}
\begin{tikzcd}
E' \arrow[r,dashed,"h"] \arrow[d,swap,"{p'}"] & \modal B'\times_{B} E \arrow[r,"\proj 2"] \arrow[d,swap,"\proj 1"] & E \arrow[d,"p"] \\
B' \arrow[r,swap,"\modalunit"] & \modal B' \arrow[r,swap,"\tilde{f}"] & B
\end{tikzcd}
\end{equation*}
In this diagram, the square on the right is a pullback by definition, and the outer rectangle is a pullback by assumption, so the square on the left is also a pullback. Therefore the map $h:E'\to \modal B'\times_B E$ is $\modal$-connected. Moreover, since the modal types are closed under pullbacks it follows that $\modal B'\times_B E$ is modal. Therefore we obtain a commuting diagram of the form
\begin{equation*}
  \begin{tikzcd}[row sep=small]
    \modal E' \arrow[dd] \arrow[rr,dashed,"\tilde{h}"] \arrow[dr] & & \modal B'\times_B E \arrow[dd] \arrow[dl] \\
    \phantom{\modal B'\times_B E} & E \\
    \modal B' \arrow[rr] \arrow[dr] & & \modal B' \arrow[dl] \\
    & B. \arrow[from=uu,crossing over]
  \end{tikzcd}
\end{equation*}
The map $\tilde{h}$ is the unique extension of $h$ along $\eta:E'\to \modal E'$. Note that $\tilde{h}$ is an equivalence, since it extends a $\modal$-connected map. The bottom map in the back square is also an equivalence. Therefore it follows, that the square on the left is equivalent to the square on the right, which is a pullback square. Hence the claim follows.
\end{proof}

\begin{cor}\label{cor:etale_lex}
Consider a pullback square
\begin{equation*}
\begin{tikzcd}
E' \arrow[d,swap,"{p'}"] \arrow[r] & E \arrow[d,"p"] \\
B' \arrow[r] & B,
\end{tikzcd}
\end{equation*}
where $p$ is assumed to be $\modal$-\'etale. We make two claims:
\begin{enumerate}
\item The square
\begin{equation*}
\begin{tikzcd}
\modal E' \arrow[d,swap,"{\modal p'}"] \arrow[r] & \modal E \arrow[d,"\modal p"] \\
\modal B' \arrow[r] & \modal B,
\end{tikzcd}
\end{equation*}
is again a pullback square.
\item The map $p'$ is $\modal$-\'etale.
\end{enumerate}
\end{cor}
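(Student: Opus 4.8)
The plan is to derive both claims from \cref{thm:etale_flattening} together with the elementary pasting lemma for pullback squares, which is a direct consequence of \cref{thm:pullback}. Write $g:E'\to E$ and $h:B'\to B$ for the horizontal maps of the given pullback square, so that $p\circ g\htpy h\circ p'$. The key idea is to use the $\modal$-étaleness of $p$ to paste its naturality square onto the given pullback, thereby landing in modal types where \cref{thm:etale_flattening} applies.

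For claim (1), I would first note that, since $p$ is $\modal$-étale, its $\modal$-naturality square (with verticals $p$ and $\modal p$ and horizontals $\eta$) is a pullback. Pasting this square to the right of the given pullback square shows that the outer rectangle
\begin{equation*}
  \begin{tikzcd}
    E' \arrow[r] \arrow[d,swap,"p'"] & \modal E \arrow[d,"\modal p"] \\
    B' \arrow[r] & \modal B
  \end{tikzcd}
\end{equation*}
is a pullback, whose horizontal maps are $\eta_E\circ g\htpy \modal g\circ\eta_{E'}$ and $\eta_B\circ h\htpy \modal h\circ\eta_{B'}$ by $\modal$-naturality. Now $\modal E$ and $\modal B$ are modal, so this is exactly a pullback square of the shape required by \cref{thm:etale_flattening}, with the modal types $\modal E,\modal B$ playing the roles of $E,B$. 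Applying that proposition produces a pullback square with top-right corner $\modal E$, bottom-right corner $\modal B$, and left column $\modal p':\modal E'\to\modal B'$, whose horizontal maps are the unique extensions of $\eta_E\circ g$ and $\eta_B\circ h$ along the units of $E'$ and $B'$. Since $\modal g$ and $\modal h$ are also such extensions into the modal types $\modal E,\modal B$, uniqueness of extensions identifies the extensions with $\modal g$ and $\modal h$. This is precisely the square of claim (1).

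For claim (2), I want to show that the $\modal$-naturality square of $p'$ is a pullback. I would decompose the outer rectangle above as the horizontal composites $E'\xrightarrow{\eta_{E'}}\modal E'\xrightarrow{\modal g}\modal E$ and $B'\xrightarrow{\eta_{B'}}\modal B'\xrightarrow{\modal h}\modal B$, which recover it by naturality. The right-hand square of this decomposition is exactly the square established in (1), hence a pullback, and the whole rectangle is a pullback. By the cancellation direction of the pasting lemma, the left-hand square, which is the $\modal$-naturality square of $p'$, is a pullback. Therefore $p'$ is $\modal$-étale.

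Once \cref{thm:etale_flattening} is in hand, this is essentially a bookkeeping argument and I do not expect a serious obstacle. The two points requiring care are the identification of the extensions produced by \cref{thm:etale_flattening} with the functorial maps $\modal g$ and $\modal h$, which rests on uniqueness of extensions of maps into modal types along a modal unit, and checking the orientation when pasting the naturality square of $p$ so that the resulting rectangle genuinely has the modal types $\modal E,\modal B$ in its right-hand column, since that is what makes \cref{thm:etale_flattening} applicable.
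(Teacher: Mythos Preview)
Your proposal is correct and follows essentially the same route as the paper: paste the given pullback with the $\modal$-naturality square of $p$ to obtain a pullback into the modal types $\modal E,\modal B$, invoke \cref{thm:etale_flattening} for claim~(1), and then cancel via the pasting lemma for claim~(2). The paper packages exactly this argument as a single commuting cube (with $E'$ at the top and $\modal B$ at the bottom), reading off the front-left and back-left faces as the two desired pullbacks; your rectangle-based presentation is the same reasoning unrolled, with the extra care you take in identifying the extensions with $\modal g,\modal h$ made implicit in the paper by the cube's coherence.
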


\begin{proof}
  Consider the commuting cube
  \begin{equation*}
    \begin{tikzcd}
      &[-.5ex] E' \arrow[dr] \arrow[d] \arrow[dl] \\
      \modal E' \arrow[d] & B' \arrow[dl] \arrow[dr] & E \arrow[dl,crossing over] \arrow[d] \\
      \modal B' \arrow[dr] & \modal E \arrow[d] \arrow[from=ul,crossing over] & B \arrow[dl] \\
      & \modal B.
    \end{tikzcd}
  \end{equation*}
  Since $f$ is assumed to be $\modal$-\'etale, the front-right square is a pullback square. Moreover, the back-right square is also a pullback square by assumption. Therefore the front-left square is a pullback by \cref{thm:etale_flattening}, so the first claim follows. Moreover, we conclude that the back-left square is a pullback, so the second claim follows.
\end{proof}

\begin{defn}
  Let $X$ be a type in a universe $\UU$, and define the type
  \begin{equation*}
    \et/X\defeq \sm{Y:\UU}{g:Y\to X}\isetale(g)
  \end{equation*}
\end{defn}

Now we note that for any map $f:A\to \modal X$ with a $\modal$-modal domain, the pullback of $f$ along $\eta:X\to \modal X$
\begin{equation*}
\begin{tikzcd}
X\times_{\modal X}A \arrow[d,swap,"\proj 1"] \arrow[r,"\proj 2"] & A \arrow[d,"f"] \\
X \arrow[r,swap,"\modalunit"] & \modal X.
\end{tikzcd}
\end{equation*}
is a $\modal$-\'etale map by \cref{lem:etale_modal,cor:etale_lex}. Thus we obtain an operation
\begin{equation*}
  \eta^\ast :\Big(\sm{A:\UU_\modal}A\to\modal X\Big)\to\et/X.
\end{equation*}

The following is a descent theorem for $\modal$-\'etale maps.

\begin{thm}[Modal descent]\label{thm:modal_descent}
For any modality $\modal$, and any type $X$, the operation
\begin{equation*}
\eta^\ast:\Big(\sm{A:\UU_\modal}A\to\modal X\Big)\to\et/X
\end{equation*}
is an equivalence.
\end{thm}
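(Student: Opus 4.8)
The plan is to exhibit an explicit two-sided inverse to $\eta^\ast$ and to verify both round-trips, using univalence to reduce each required identification to an equivalence commuting with the relevant structure map. Since $\isetale(g)$ is a proposition and being modal is a property, an identification in $\et/X$ is exactly an equivalence of domains commuting with the maps to $X$, and an identification in $\sm{A:\UU_\modal}A\to\modal X$ is exactly an equivalence of modal types commuting with the maps to $\modal X$. I would define the candidate inverse
\begin{equation*}
  \Phi : \et/X \to \Big(\sm{A:\UU_\modal}A\to\modal X\Big), \qquad \Phi(Y,g)\defeq (\modal Y,\modal g),
\end{equation*}
which is well defined because $\modal Y$ is always modal and $\modal g:\modal Y\to\modal X$ maps into $\modal X$.

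For the composite $\eta^\ast\circ\Phi$, I would start with a $\modal$-\'etale map $g:Y\to X$, so that $\eta^\ast(\Phi(Y,g))$ is the projection $\proj 1:X\times_{\modal X}\modal Y\to X$. Since $g$ is $\modal$-\'etale, its $\modal$-naturality square
\begin{equation*}
  \begin{tikzcd}
    Y \arrow[r,"g"] \arrow[d,swap,"\eta"] & X \arrow[d,"\eta"] \\
    \modal Y \arrow[r,swap,"\modal g"] & \modal X
  \end{tikzcd}
\end{equation*}
is a pullback square, so the gap map $Y\to X\times_{\modal X}\modal Y$ is an equivalence commuting with $g$ and $\proj 1$. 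This equivalence over $X$ furnishes the identification $\eta^\ast(\Phi(Y,g))=(Y,g)$.

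For the composite $\Phi\circ\eta^\ast$, I would start with $(A,f)$ where $A$ is modal. Applying $\eta^\ast$ gives $\proj 1:X\times_{\modal X}A\to X$, which is $\modal$-\'etale because $f$ is a map of modal types (hence $\modal$-\'etale by \cref{lem:etale_modal}) and $\proj 1$ is a base change of $f$ (hence $\modal$-\'etale by \cref{cor:etale_lex}). Applying $\Phi$ yields $(\modal(X\times_{\modal X}A),\modal\proj 1)$, and to identify it with $(A,f)$ I would invoke \cref{cor:etale_lex}(1) on the defining pullback square, which shows that
\begin{equation*}
  \begin{tikzcd}
    \modal(X\times_{\modal X}A) \arrow[r] \arrow[d,swap,"\modal\proj 1"] & \modal A \arrow[d,"\modal f"] \\
    \modal X \arrow[r,swap,"\modal\eta"] & \modal\modal X
  \end{tikzcd}
\end{equation*}
is again a pullback. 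Because $A$ is modal the unit $\eta_A:A\to\modal A$ is an equivalence, and because $\modal X$ is modal both $\modal\eta$ and $\eta_{\modal X}$ are equivalences $\modal X\to\modal\modal X$. Pulling back along the equivalence $\modal\eta$ identifies $\modal(X\times_{\modal X}A)$ with $\modal A\simeq A$ over $\modal X$, and chasing the naturality homotopy $\modal f\circ\eta_A\htpy\eta_{\modal X}\circ f$ shows that $\modal\proj 1$ is carried to $f$, giving $\Phi(\eta^\ast(A,f))=(A,f)$.

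The routine parts are the two applications of \cref{thm:pullback} and the bookkeeping that the constructed equivalences commute with the maps to $X$ and $\modal X$; these are forced since being $\modal$-\'etale and being modal are propositions. The main obstacle is the second round-trip: one must extract from \cref{cor:etale_lex}(1) not merely that $\modal(X\times_{\modal X}A)$ is the displayed pullback, but that under the equivalences coming from modality of $A$ and $\modal X$ the induced map $\modal\proj 1$ is genuinely homotopic to $f$. This is precisely where the coherence $\eta_{\modal X}\htpy\modal\eta$ between the two units on a modal type enters, and it is the only place the idempotency of $\modal$ is used in an essential way.
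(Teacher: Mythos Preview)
Your proof is correct and the overall strategy---defining the inverse $\Phi(Y,g)=(\modal Y,\modal g)$ and checking both round-trips---coincides with the paper's. The first round-trip is handled identically.

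For the second round-trip the paper takes a shorter route: rather than invoking \cref{cor:etale_lex}(i) and then chasing the coherence $\modal\eta_X\htpy\eta_{\modal X}$, it observes that the map $X\times_{\modal X}A\to\modal X$ admits two factorizations as a $\modal$-connected map followed by a $\modal$-modal map, namely
\begin{equation*}
  \begin{tikzcd}[column sep=small]
    X\times_{\modal X}A \arrow[r,"\proj 2"] & A \arrow[r,"f"] & \modal X
  \end{tikzcd}
  \qquad\text{and}\qquad
  \begin{tikzcd}[column sep=small]
    X\times_{\modal X}A \arrow[r,"\eta"] & \modal(X\times_{\modal X}A) \arrow[r,"\modal\proj 1"] & \modal X,
  \end{tikzcd}
\end{equation*}
and then appeals directly to uniqueness of factorizations in the stable orthogonal factorization system. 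This packages the equivalence $A\simeq\modal(X\times_{\modal X}A)$ together with the commuting triangle over $\modal X$ in one stroke, so the idempotency coherence you isolate never needs to be named explicitly. Your argument is equally valid and has the virtue of making that coherence visible; the paper's is more conceptual and avoids the bookkeeping.
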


\begin{proof}
If $g:Y\to X$ is $\modal$-\'etale, then the square
\begin{equation*}
\begin{tikzcd}
Y \arrow[d,swap,"g"] \arrow[r,"\modalunit"] & \modal Y \arrow[d,"\modal g"] \\
X \arrow[r,swap,"\modalunit"] & \modal X
\end{tikzcd}
\end{equation*}
is a pullback square. Therefore we see that the map $\modal g : \modal Y \to \modal X$ is in the fiber of $\eta^\ast$ at $g : Y\to X$. 

It remains to show that for any map $f:A\to\modal X$ with modal domain, there is an equivalence $\eqv{A}{\modal (X\times_{\modal X} A)}$ such that the triangle
\begin{equation*}
\begin{tikzcd}[column sep=0]
A \arrow[dr,swap,"f"] \arrow[rr,"\eqvsym"] & & \modal (X\times_{\modal X} A) \arrow[dl,"\modal(\eta^\ast(f))"] \\
\phantom{\modal (X\times_{\modal X} A)} & \modal X
\end{tikzcd}
\end{equation*}
commutes. To see this, note that both $f\circ \proj 2$ and $\modal(\eta^\ast(f))\circ \modalunit$ factor the same map as a $\modal$-connected map followed by a modal map, so the claim follows from uniqueness of factorizations.
\end{proof}

\begin{cor}
Suppose $P:X\to\UU$ is a family of types such that the projection map $\proj 1:\big(\sm{x:X}P(x)\big)\to X$ is $\modal$-\'etale. Then $P(x)$ is $\modal$-modal for each $x:X$, and the map $P:X\to\UU_\modal$ has a unique extension
\begin{equation*}
\begin{tikzcd}
X \arrow[d,swap,"\modalunit"] \arrow[r,"P"] & \UU_\modal. \\
\modal X \arrow[ur,densely dotted,swap,"\tilde{P}"] 
\end{tikzcd}
\end{equation*}
It follows that the commuting square
\begin{equation*}
\begin{tikzcd}
\sm{x:X}P(x) \arrow[d,swap,"\proj 1"] \arrow[r] & \sm{t:\modal X}\tilde{P}(t) \arrow[d,"\proj 1"] \\
X \arrow[r,swap,"\modalunit"] & \modal X
\end{tikzcd}
\end{equation*}
is a pullback square. In particular the top map is $\modal$-connected, so this square is in fact a $\modal$-naturality square.
\end{cor}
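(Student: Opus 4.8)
The plan is to read the entire statement off the modal descent theorem (\cref{thm:modal_descent}), using the standard equivalence between families of types and maps given by taking fibers. First I would record the first claim. Since $\proj 1:\big(\sm{x:X}P(x)\big)\to X$ is $\modal$-\'etale, its $\modal$-naturality square is a pullback, so $\proj 1$ is a base change of $\modal\proj 1:\modal\big(\sm{x:X}P(x)\big)\to\modal X$. The latter is a map between $\modal$-modal types, and because $\modal$ is a modality it is $\Sigma$-closed (\cref{prp:modality}), so the fibers of $\modal\proj 1$ are $\modal$-modal; being base changes of these, the fibers $P(x)$ of $\proj 1$ are $\modal$-modal as well. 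This lets us regard $P$ as a map $X\to\UU_\modal$.

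For the extension, I would apply \cref{thm:modal_descent} to the point $\big(\sm{x:X}P(x),\proj 1\big)\in\et/X$ determined by our data. Its image under the inverse of $\eta^\ast$ is a pair $(A,f)$ with $A$ a $\modal$-modal type and $f:A\to\modal X$; since $A$ and $\modal X$ are $\modal$-modal, $\Sigma$-closure again makes every fiber of $f$ $\modal$-modal, so I may set $\tilde P(t)\defeq\fib{f}{t}$ to obtain a map $\tilde P:\modal X\to\UU_\modal$ with $A\simeq\sm{t:\modal X}\tilde P(t)$ and $f\simeq\proj 1$. The identity $\eta^\ast(f)=\big(\sm{x:X}P(x),\proj 1\big)$ then unfolds to exactly the asserted pullback square
\begin{equation*}
\begin{tikzcd}
\sm{x:X}P(x) \arrow[d,swap,"\proj 1"] \arrow[r] & \sm{t:\modal X}\tilde P(t) \arrow[d,"\proj 1"] \\
X \arrow[r,swap,"\eta"] & \modal X,
\end{tikzcd}
\end{equation*}
and \cref{thm:pullback} identifies its fiber over $x:X$, namely $P(x)$, with the fiber of the right-hand map over $\eta(x)$, namely $\tilde P(\eta(x))$. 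By univalence this yields the required identification $\tilde P\circ\eta=P$, so $\tilde P$ extends $P$ along $\eta$.

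The main obstacle is \emph{uniqueness} of $\tilde P$: since $\UU_\modal$ need not itself be $\modal$-modal, we cannot invoke the universal property of the unit directly, and must route everything through modal descent. I would compose the equivalence $\eta^\ast$ of \cref{thm:modal_descent} with the two standard fiber equivalences $(\modal X\to\UU_\modal)\simeq\sm{A:\UU_\modal}(A\to\modal X)$ — which restricts from the usual $(\modal X\to\UU)\simeq\sm{A:\UU}(A\to\modal X)$, using $\Sigma$-closure to keep total spaces $\modal$-modal — and $\et/X\simeq\sm{Q:X\to\UU}\isetale(\proj 1)$. Tracing $\tilde P$ through these three maps and applying \cref{thm:pullback} exactly as above shows that the composite sends $\tilde P$ to the family $x\mapsto\tilde P(\eta(x))$, i.e.\ it is precomposition by $\eta$, landing in the \'etale families and carrying the (propositional) \'etaleness witness along. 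Its fiber over $(P,e)$, where $e$ witnesses that $\proj 1$ is \'etale, is therefore contractible; and since $\isetale$ is a proposition this fiber is equivalent to $\sm{\tilde P:\modal X\to\UU_\modal}(\tilde P\circ\eta=P)$, which is consequently contractible. This is precisely the uniqueness of the extension.

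Finally, for the last sentence I would note that the pullback square above exhibits its top map as a base change of the modal unit $\eta:X\to\modal X$. The modal units are $\modal$-connected, and the $\modal$-connected maps form the pullback-stable left class of the stable factorization system, so the top map is $\modal$-connected. As it is a $\modal$-connected map into the $\modal$-modal type $\sm{t:\modal X}\tilde P(t)$ (again by $\Sigma$-closure), uniqueness of the connected/modal factorization identifies its codomain with $\modal\big(\sm{x:X}P(x)\big)$ and the map itself with the modal unit, so the square is a $\modal$-naturality square.
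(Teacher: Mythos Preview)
Your argument is correct and follows the same route as the paper---everything is extracted from the modal descent equivalence $\eta^\ast$ of \cref{thm:modal_descent}---but you spell out considerably more than the paper does. The paper's proof addresses only the final sentence: it takes the pullback square as already given by modal descent, observes that the bottom map $\eta$ is $\modal$-connected so the top map is too (by pullback-stability of the left class), and then uses that a $\modal$-connected map into a $\modal$-modal type is a modal unit to conclude the square is a naturality square. The first three claims (modality of $P(x)$, existence of $\tilde P$, and the pullback square) are left implicit as direct unpackings of \cref{thm:modal_descent}. Your treatment of uniqueness---rewriting $\eta^\ast$ via the fiberwise equivalences on both sides so that it becomes precomposition by $\eta$ on type families, and reading off contractibility of the extension type from contractibility of a fiber of an equivalence---is a genuine addition; the paper does not make this step explicit, presumably regarding it as the standard translation of the equivalence $\eta^\ast$ into family language.
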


\begin{proof}
  Since the square is a pullback, and the bottom map is $\modal$-connected, it follows that the top map is $\modal$-connected. However, the codomain of the top map is $\modal$-modal, so it follows that the square is equivalent to the $\modal$-naturality square
  \begin{equation*}
    \begin{tikzcd}
      \sm{x:X}P(x) \arrow[d,swap,"\proj 1"] \arrow[r] & \modal\Big(\sm{x:X}P(x)\Big) \arrow[d,"\modal \proj 1"] \\
X \arrow[r,swap,"\modalunit"] & \modal X.
    \end{tikzcd}
  \end{equation*}
\end{proof}

\section{$\modal$-equivalences}

The class of $\modal$-equivalences was introduced by \cite{CORS} in the more general case of reflective subuniverses. We will use them in this section to derive some generalizations of the results in the previous section.

\begin{defn}
We say that a map $f:A\to B$ is an \define{$\modal$-equivalence} if $\modal f:\modal A\to \modal B$ is an equivalence.
\end{defn}

\begin{rmk}
The difference between the notions of $\modal$-equivalences and $\modal$-connected maps is best explained by an example. In the case of $n$-truncation, the $n$-equivalences are precisely the maps that induce isomorphisms on the first $n$ homotopy groups. The $n$-connected maps are the maps that induce isomorphisms on the first $n$ homotopy groups, and moreover induce an epimorphism on the $(n+1)$-st homotopy group. 

We also note that the $n$-equivalences are not stable under pullbacks, whereas the $n$-connected maps are. Consider for instance the pullback square
\begin{equation*}
\begin{tikzcd}
\loopspace {\sphere{n+1}} \arrow[r] \arrow[d] & \unit \arrow[d] \\
\unit\arrow[r] & \sphere{n+1}
\end{tikzcd}
\end{equation*}
Here the map on the right is an $n$-equivalence, since $\sphere{n+1}$ is $n$-connected. However, the map on the left is not an $n$-equivalence, since the $n$-th homotopy group of $\loopspace{\sphere{n+1}}$ is not trivial: it is the $(n+1)$-st homotopy group of $\sphere{n+1}$, which is $\Z$.
\end{rmk}
We recall from \cite{CORS} the following facts about $\modal$-equivalences:
\begin{prp}
  \label{lem:3for2_mequiv}\label{cor:mequiv_mconn}
  \begin{enumerate}
  \item The $\modal$-equivalences satisfy the 3-for-2 property.
  \item A map $f:A\to B$ is a $\modal$-equivalence if and only if for every $\modal$-modal type $X$, the precomposition map
    \begin{equation*}
      \precomp{f} : (B \to X) \to (A \to X)
    \end{equation*}
    is an equivalence.
  \item
    Every $\modal$-connected map is a $\modal$-equivalence.
  \end{enumerate}
\end{prp}

We learned about the following generalization of \cref{thm:modal_descent} in a discussion with Anel, Awodey, Joyal, and Shulman: The factorization system of $\modal$-equivalences and $\modal$-\'etale maps is an orthogonal factorization system that satisfies the property that the right class descends along maps in the left class:

\begin{thm}
  For any $\modal$-equivalence $f:A\to B$, the pullback operation
  \begin{equation*}
    f^\ast : \et/B\to\et/A
  \end{equation*}
  is an equivalence.
\end{thm}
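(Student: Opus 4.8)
The plan is to reduce the statement to modal descent (\cref{thm:modal_descent}) together with the hypothesis that $\modal f$ is an equivalence. Recall that the operation $\eta^\ast$ appearing in \cref{thm:modal_descent} is pullback along the modal unit, and that it is an equivalence
\begin{equation*}
  \eta^\ast:\Big(\sm{C:\UU_\modal}C\to\modal X\Big)\xrightarrow{\ \simeq\ }\et/X
\end{equation*}
for every type $X$. The map $f^\ast$ is well-defined because the pullback of a $\modal$-\'etale map is again $\modal$-\'etale, by the second part of \cref{cor:etale_lex}. I would now fit $f^\ast$ into the square
\begin{equation*}
  \begin{tikzcd}[column sep=large]
    \sm{C:\UU_\modal}(C\to\modal B) \arrow[r,"(\modal f)^\ast"] \arrow[d,swap,"\eta^\ast"] & \sm{C:\UU_\modal}(C\to\modal A) \arrow[d,"\eta^\ast"] \\
    \et/B \arrow[r,swap,"f^\ast"] & \et/A,
  \end{tikzcd}
\end{equation*}
where $(\modal f)^\ast$ denotes pullback along $\modal f$.

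Granting that this square commutes, the conclusion is immediate: the two vertical maps are equivalences by \cref{thm:modal_descent}, and $(\modal f)^\ast$ is an equivalence because $f$ is a $\modal$-equivalence, so $\modal f:\modal A\to\modal B$ is an equivalence and pullback along it is an equivalence of slices (its domains stay modal, since modal types are closed under pullback). Hence the fourth map $f^\ast$ is an equivalence as well.

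It remains to prove that the square commutes, which I expect to be the main point. Fix $(C,h)$ with $C$ modal and $h:C\to\modal B$. Going down and then along the bottom produces, by definition of $\eta^\ast$ and of $f^\ast$, the first projection of the iterated pullback $A\times_B(B\times_{\modal B}C)$; going along the top and then down produces the first projection of $A\times_{\modal A}(\modal A\times_{\modal B}C)$. By the pasting lemma for pullbacks, the first is the pullback of $h$ along $\eta_B\circ f:A\to\modal B$ and the second is the pullback of $h$ along $\modal f\circ\eta_A:A\to\modal B$. These two maps $A\to\modal B$ are identified by the $\modal$-naturality square of $f$, namely $\eta\circ f\htpy\modal f\circ\eta$, so the two resulting objects of $\et/A$ agree, and the square commutes.

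The hard part is purely organizational: one must check that the two pasting equivalences and the homotopy $\eta\circ f\htpy\modal f\circ\eta$ assemble coherently into a single homotopy filling the square of maps, rather than merely agreeing objectwise. Since every ingredient---the pasting lemma, \cref{thm:modal_descent}, and the $\modal$-naturality square---is already available, this is a matter of bookkeeping rather than a genuine mathematical obstacle.
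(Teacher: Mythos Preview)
Your argument is correct and follows essentially the same route as the paper: both build the square with $f^\ast$ on the bottom, $(\modal f)^\ast$ on the top, and modal descent equivalences on the sides, then conclude by 3-for-2. The only cosmetic difference is that the paper writes the top row as $\et/(\modal B)\to\et/(\modal A)$ rather than $\sm{C:\UU_\modal}(C\to\modal B)\to\sm{C:\UU_\modal}(C\to\modal A)$, but these types coincide since any \'etale map into a modal type has modal domain; your explicit verification of the square's commutativity via pullback pasting and the $\modal$-naturality homotopy is a detail the paper leaves implicit.
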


\begin{proof}
  Given a $\modal$-equivalence $f:A\to B$, consider the commuting square
  \begin{equation*}
    \begin{tikzcd}
      \et/(\modal B) \arrow[r,"\modal f^\ast"] \arrow[d,"\eta^\ast"] & \et/(\modal A) \arrow[d,"\eta^\ast"] \\
      \et/B \arrow[r,"f^\ast"] & \et/A
    \end{tikzcd}
  \end{equation*}
  By the modal descent theorem (\cref{thm:modal_descent}) it follows that the maps $\eta^\ast$ are equivalences. Furthermore, the map $\modal f$ is assumed to be an equivalence. Therefore it follows that $f^\ast$ is an equivalence.
\end{proof}

Next, we show that $\modal$-equivalences are stable under base change by $\modal$-\'etale maps.

\begin{prp}\label{prop:LR-cartesian-converse}
  Consider a pullback square
  \begin{equation*}
    \begin{tikzcd}
      E' \arrow[d,swap,"{p'}"] \arrow[r,"g"] & E \arrow[d,"p"] \\
      B' \arrow[r,swap,"f"] & B
    \end{tikzcd}
  \end{equation*}
  in which $p$ and $p'$ are $\modal$-\'etale and $f$ is a $\modal$-equivalence. Then the map $g$ is a $\modal$-equivalence.
\end{prp}

\begin{proof}
  Consider the commuting cube
  \begin{equation*}
    \begin{tikzcd}
      \phantom{\modal B} & E' \arrow[dl] \arrow[d] \arrow[dr] & \phantom{\modal B} \\
      \modal E' \arrow[d] & B' \arrow[dl] \arrow[dr] & E \arrow[d] \arrow[dl,crossing over] \\
      \modal B' \arrow[dr] & \modal E \arrow[from=ul,crossing over] \arrow[d] & B \arrow[dl] \\
      & \modal B
    \end{tikzcd}
  \end{equation*}
  In this cube, the back-left, back-right, and front-right squares are pullback squares by assumption. Therefore it follows by \cref{thm:etale_flattening} that the front-left square is a pullback. However, in this square the map $\modal B'\to \modal B$ is an equivalence, so we conclude that the map $\modal E'\to \modal E$ is an equivalence. In other words, the map $g:E'\to E$ is a $\modal$-equivalence.
\end{proof}

  The following theorem is a descent-type result of the kind of \cref{thm:descent-sigma,thm:descent-surjective}.

  \begin{thm}
    Consider a diagram of the form
    \begin{equation*}
      \begin{tikzcd}
        E'' \arrow[d] \arrow[r] & E' \arrow[d] \arrow[r] & E \arrow[d] \\
        B'' \arrow[r,swap,"h"] & B' \arrow[r] & B
      \end{tikzcd}
    \end{equation*}
    in which $h$ is a $\modal$-equivalence, the vertical maps in the right square are $\modal$-\'etale, and the left square is a pullback square. Then the following are equivalent:
    \begin{enumerate}
    \item The outer rectangle is a pullback square.
    \item The square on the right is a pullback square.
    \end{enumerate}
  \end{thm}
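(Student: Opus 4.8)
The plan is to treat the two implications separately, since one is formal and the other carries all the content. The implication (ii)~$\Rightarrow$~(i) uses none of the special hypotheses: the left square is a pullback by assumption, so if the right square is also a pullback, then the outer rectangle is a pullback by the pasting lemma for pullbacks.

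For (i)~$\Rightarrow$~(ii) the idea is to pass to $\modal$-images, where the horizontal direction of the left square becomes invertible, settle the statement there, and then descend. Write $p''$, $p'$, $p$ for the three vertical maps and $g''$, $g'$ for the two top maps. First I would apply $\modal$ to the whole diagram. Since $p$ is $\modal$-\'etale and the outer rectangle is assumed to be a pullback, \cref{cor:etale_lex} shows that the $\modal$-image of the outer rectangle is a pullback. Likewise, since $p'$ is $\modal$-\'etale and the left square is a pullback, \cref{cor:etale_lex} shows that the $\modal$-image of the left square is a pullback; as its bottom map $\modal h$ is an equivalence (because $h$ is a $\modal$-equivalence) and equivalences are stable under pullback, its top map $\modal g''$ is an equivalence as well. (Alternatively one can obtain that $g''$ is a $\modal$-equivalence directly from \cref{prop:LR-cartesian-converse}.)

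Now consider the $\modal$-image of the full diagram
\begin{equation*}
  \begin{tikzcd}
    \modal E'' \arrow[d,swap,"\modal p''"] \arrow[r,"\modal g''"] & \modal E' \arrow[d,"\modal p'"] \arrow[r,"\modal g'"] & \modal E \arrow[d,"\modal p"] \\
    \modal B'' \arrow[r,swap,"\modal h"] & \modal B' \arrow[r] & \modal B.
  \end{tikzcd}
\end{equation*}
In its left square both horizontal maps $\modal h$ and $\modal g''$ are equivalences, so this square is a pullback, and the right square is equivalent, via precomposition with these equivalences, to the outer $\modal$-image rectangle. Since the latter is a pullback, the $\modal$-image of the right square is a pullback.

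It remains to descend this conclusion to the right square itself. Here I would use that $p'$ and $p$ are $\modal$-\'etale, so their $\modal$-naturality squares are pullbacks. Pasting the naturality square of $p'$ on the left of the $\modal$-image of the right square produces a pullback rectangle; by naturality this same rectangle is obtained by pasting the naturality square of $p$ on the right of the right square. Since the naturality square of $p$ is a pullback and the whole rectangle is a pullback, the converse pasting lemma (an instance of \cref{thm:pullback}) forces the right square to be a pullback. The main obstacle is bookkeeping in this last step: one must check that the two factorizations of the rectangle agree up to the relevant naturality homotopies, and carefully track which square plays the role of the already-known pullback at each application, since only the converse pasting lemma—outer and right pullbacks imply left pullback—is available, rather than a free ``left plus outer imply right''.
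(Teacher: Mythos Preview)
Your proposal is correct and follows essentially the same route as the paper. The paper organizes the argument as a single three-dimensional diagram built from the $\modal$-naturality squares, invoking \cref{thm:etale_flattening} directly rather than its corollary \cref{cor:etale_lex}, but the logical skeleton---pass to $\modal$, use that $\modal h$ is an equivalence to deduce the front-right square is a pullback, then descend via the \'etale naturality squares and converse pasting---is identical to yours.
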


  \begin{proof}
    We have that (ii) implies (i) by the pasting lemma for pullbacks, so it suffices to show that (i) implies (ii). Consider the diagram
    \begin{equation*}
      \begin{tikzcd}
        E'' \arrow[rr] \arrow[dd] \arrow[dr] & \phantom{\modal E''} & E' \arrow[rr] \arrow[dd] \arrow[dr] & \phantom{\modal E''} & E \arrow[dd] \arrow[dr] & \phantom{\modal E''} \\
        \phantom{\modal E''} & \modal E'' \arrow[rr,crossing over] & \phantom{\modal E''} & \modal E' \arrow[rr,crossing over] & \phantom{\modal E''} & \modal E \arrow[dd] \\
        B'' \arrow[dr] \arrow[rr] & & B' \arrow[dr] \arrow[rr] & & B \arrow[dr] \\
        & \modal B'' \arrow[from=uu,crossing over] \arrow[rr,swap,"\simeq"] & & \modal B' \arrow[from=uu,crossing over] \arrow[rr] & & \modal B.
      \end{tikzcd}
    \end{equation*}
    In this diagram, the three vertical $\modal$-naturality squares are all pullback squares, because the vertical maps $E'\to B'$ and $E\to B$ are assumed to be $\modal$-\'etale, and the vertical map $E''\to B''$ is $\modal$-\'etale by \cref{cor:etale_lex}. Furthermore, the back-left square and the back rectangle are assumed to be pullback squares. By \cref{thm:etale_flattening} it follows that the front-left square and the front rectangle are pullback squares. Furthermore, the top map in the front-left square is an equivalence. Therefore we see that the front-right square, which is equivalent to the front rectangle, is a pullback square. Using the pullback squares on the sides of the right cube, we conclude that the back-right square is a pullback square.
  \end{proof}

  \section{Reflective factorization systems}\label{section:reflective-factorization-system}

  We will now define the reflective factorization system of a modality, of which the right class is the class of $\modal$-\'etale maps.

\begin{defn}
The \define{reflective factorization system} associated to a modality $\modal$ consists of the $\modal$-equivalences as the left class, and the $\modal$-\'etale maps as the right class.
\end{defn}

\begin{thm}\label{thm:rfs_orthogonal}
  The pair $(\mathcal{L},\mathcal{R})$, where $\mathcal{L}$ is the class of $\modal$-equivalences, and $\mathcal{R}$ is the class of $\modal$-\'etale maps, is an orthogonal factorization system.
\end{thm}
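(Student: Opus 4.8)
The plan is to verify the three defining properties of an orthogonal factorization system for the pair $(\mathcal{L},\mathcal{R})$. The first is the easiest: every equivalence is a $\modal$-equivalence since $\modal$ preserves equivalences, and every equivalence is $\modal$-\'etale as noted just after the definition of $\modal$-\'etale maps. Closure of $\mathcal{L}$ under composition is a consequence of the 3-for-2 property (\cref{lem:3for2_mequiv}), and closure of $\mathcal{R}$ under composition was recorded directly after its definition.

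For the factorization property, given $f:A\to B$ I would factor $f$ through the pullback $C\defeq \modal A\times_{\modal B}B$ of $\modal f$ along $\eta_B$. The gap map of the $\modal$-naturality square of $f$ provides a map $g:A\to C$ with $\pi_1\circ g\htpy \eta_A$ and $\pi_2\circ g\htpy f$, where $\pi_1,\pi_2$ are the two projections out of $C$. The right leg $\pi_2:C\to B$ is a base change of $\modal f$, which is a map between modal types and hence $\modal$-\'etale by \cref{lem:etale_modal}; therefore $\pi_2$ is $\modal$-\'etale by \cref{cor:etale_lex}. For the left leg, the projection $\pi_1:C\to \modal A$ is a base change of the $\modal$-connected map $\eta_B$, hence is itself $\modal$-connected and in particular a $\modal$-equivalence, so $\modal\pi_1$ is an equivalence. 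Since $\modal\eta_A$ is also an equivalence and $\modal\pi_1\circ\modal g\htpy \modal\eta_A$, the 3-for-2 property forces $\modal g$ to be an equivalence, so $g:A\to C$ is a $\modal$-equivalence.

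The orthogonality property is the main obstacle, and here I would combine the two characterizations of the classes. Let $i:A\to B$ be a $\modal$-equivalence and $f:X\to Y$ a $\modal$-\'etale map; we must show that the square with horizontal maps $X^B\to Y^B$ and $X^A\to Y^A$, given by postcomposition with $f$, and vertical maps given by precomposition with $i$, is a pullback. Because $f$ is $\modal$-\'etale, its $\modal$-naturality square exhibits an equivalence $X\simeq Y\times_{\modal Y}\modal X$, and since exponentiation preserves pullbacks we obtain $X^B\simeq Y^B\times_{(\modal Y)^B}(\modal X)^B$ and likewise $X^A\simeq Y^A\times_{(\modal Y)^A}(\modal X)^A$. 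The crucial input is that $\modal X$ and $\modal Y$ are modal and $i$ is a $\modal$-equivalence, so by \cref{cor:mequiv_mconn} the precomposition maps $(\modal X)^B\to(\modal X)^A$ and $(\modal Y)^B\to(\modal Y)^A$ are equivalences. A pullback-pasting computation rewrites $Y^B\times_{Y^A}X^A$ as $Y^B\times_{(\modal Y)^A}(\modal X)^A$, and transporting the two modal exponentials back along the above equivalences identifies this with $Y^B\times_{(\modal Y)^B}(\modal X)^B\simeq X^B$, which is exactly the claim that the exponential square is a pullback.

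I expect the bookkeeping in the orthogonality step to be the most delicate part: one must check that the several pullback cancellations remain compatible with the precomposition equivalences, i.e.\ that postcomposing with $\eta_Y$ or $\modal f$ commutes with precomposing with $i$. By contrast, the closure and factorization properties follow fairly mechanically from \cref{lem:etale_modal}, \cref{cor:etale_lex}, and \cref{cor:mequiv_mconn}.
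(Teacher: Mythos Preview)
Your proof is correct and follows essentially the same route as the paper's. The factorization through $\modal A\times_{\modal B}B$ is identical, and your orthogonality argument---exponentiating the \'etale naturality square and using that $(\modal X)^B\to(\modal X)^A$ and $(\modal Y)^B\to(\modal Y)^A$ are equivalences---is exactly the paper's cube argument unwound into a chain of pullback pastings.
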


\begin{proof}
  First we show that every map factors as a $\modal$-equivalence followed by a $\modal$-\'etale map. Consider a map $f:A\to B$, and the diagram
\begin{equation*}
\begin{tikzcd}
A \arrow[ddr,bend right=15,swap,"f"] \arrow[drr,bend left=15,"\modalunit"] \arrow[dr,"\mathsf{gap}" description] \\
& B\times_{\modal B} \modal A \arrow[d,swap,"\proj 1"] \arrow[r,"\proj 2"] & \modal A \arrow[d,"\modal f"] \\
& B \arrow[r,swap,"\modalunit"] & \modal B.
\end{tikzcd}
\end{equation*}
Then $\proj 1:B\times_{\modal B} \modal A\to B$ is a pullback of a map between modal types, so it is $\modal$-\'etale by \cref{cor:etale_lex}. Furthermore, the map $\proj 2:B\times_{\modal B}\modal A\to \modal A$ is a pullback of a $\modal$-connected map, so it is $\modal$-connected. It follows from \cref{cor:mequiv_mconn} that it is a $\modal$-equivalence. Since the modal unit $\modalunit :A\to\modal A$ is also $\modal$-connected, and therefore a $\modal$-equivalence, we obtain by the 3-for-2 property of $\modal$-equivalences established in \cref{lem:3for2_mequiv} that the gap map is also a $\modal$-equivalence.
  
It remains to show that for every $\modal$-equivalence $i:A\to B$, and every $\modal$-\'etale map $f:X\to Y$, the square
\begin{equation*}
\begin{tikzcd}
X^B \arrow[r] \arrow[d] & Y^B \arrow[d] \\
X^A \arrow[r] & Y^A
\end{tikzcd}
\end{equation*}
is a pullback square. Consider the commuting cube
\begin{equation*}
\begin{tikzcd}
&[-1ex] X^B \arrow[dl] \arrow[d] \arrow[dr] \\
(\modal X)^B \arrow[d] & X^A \arrow[dl] \arrow[dr] & Y^B \arrow[d] \arrow[dl,crossing over] \\
(\modal X)^A \arrow[dr] & (\modal Y)^B \arrow[from=ul,crossing over] \arrow[d] & Y^A \arrow[dl] \\
& (\modal Y)^A
\end{tikzcd}
\end{equation*}
In this cube the top and bottom squares are pullback squares by the assumption that $f$ is $\modal$-\'etale and the fact that exponents of pullback squares are again pullback squares. Furthermore, the square in the front left is pullback, because the two vertical maps are equivalences by the assumption that $i:A\to B$ is a $\modal$-equivalence. Therefore we conclude that the square in the back right is also a pullback square, as desired.
\end{proof}

The reflective factorization system of a modality enjoys several properties. We highlight two of them, which turn out to characterize the orthogonal factorization systems that arise as the reflective factorization system of a modality. Note that the following proposition is a converse to \cref{prop:LR-cartesian-converse}.

\begin{prp}\label{prop:LR-cartesian}
  Any commuting square of the form
  \begin{equation*}
    \begin{tikzcd}
      E' \arrow[d,swap,"{p'}"] \arrow[r,"g"] & E \arrow[d,"p"] \\
      B' \arrow[r,swap,"f"] & B
    \end{tikzcd}
  \end{equation*}
  in which $p$ and $p'$ are $\modal$-\'etale and $f$ and $g$ are $\modal$-equivalences is a pullback square.
\end{prp}

\begin{proof}
  Consider the cube
  \begin{equation*}
    \begin{tikzcd}
      & E' \arrow[dl] \arrow[d] \arrow[dr] & \phantom{\modal E'} \\
      \modal E' \arrow[d] & B' \arrow[dl] \arrow[dr] & E \arrow[d] \arrow[dl,crossing over] \\
      \modal B' \arrow[dr] & \modal E \arrow[from=ul,crossing over] \arrow[d] & B \arrow[dl] \\
      & \modal B & \phantom{\modal B'}
    \end{tikzcd}
  \end{equation*}
  In this cube the back-left square and the front-right square are pullback squares by the assumption that $p'$ and $p$ are $\modal$-\'etale. Moreover, the maps $\modal B'\to\modal B$ and $\modal E'\to\modal E$ are equivalences by the assumption that $f$ and $g$ are $\modal$-equivalences. Therefore it follows that the front-left square is a pullback square. We conclude that the back-right square is a pullback square.
\end{proof}

\begin{prp}\label{prop:R-stable}
  Suppose $p_i:E_i\to B_i$ is a $\modal$-\'etale map for each $i:I$, where $I$ is assumed to be a $\modal$-modal type. Then the induced map on total spaces
  \begin{equation*}
    \total{p} : \sm{i:I}E_i\to \sm{i:I}B_i
  \end{equation*}
  is also $\modal$-\'etale.
\end{prp}

\begin{proof}
  Since pullback squares are preserved by $\Sigma$, it follows from our assumption that the square
  \begin{equation*}
    \begin{tikzcd}
      \sm{i:I}E_i \arrow[r] \arrow[d] & \sm{i:I}\modal E_i \arrow[d] \\
      \sm{i:I}B_i \arrow[r] & \sm{i:I}\modal B_i
    \end{tikzcd}
  \end{equation*}
  is a pullback square. The vertical map on the right is a map between $\modal$-modal types by the assumption that $I$ is modal. Therefore it follows that the map on the left is $\modal$-\'etale.
\end{proof}

Now we show that if an orthogonal factorization system satisfies the conditions described in \cref{prop:LR-cartesian,prop:R-stable}, then it is the reflective factorization system of a modality.

\begin{thm}\label{thm:rfs}
  Suppose $(\mathcal{L},\mathcal{R})$ is an orthogonal factorization system satisfying the following two properties:
  \begin{enumerate}
  \item Any commuting square of the form
    \begin{equation*}
      \begin{tikzcd}
        E' \arrow[d,swap,"{p'}"] \arrow[r,"g"] & E \arrow[d,"p"] \\
        B' \arrow[r,swap,"f"] & B
      \end{tikzcd}
    \end{equation*}
    in which $p',p\in\mathcal{R}$ and $f,g\in\mathcal{L}$ is a pullback square.
  \item For any family of $\mathcal{R}$-maps
    \begin{equation*}
      p_i:E_i\to B_i
    \end{equation*}
    indexed by a type $I$ such that the terminal projection $I\to\unit$ is in $\mathcal{R}$, the induced map on total spaces
    \begin{equation*}
      \total{p}:\sm{i:I}E_i\to\sm{i:I}B_i
    \end{equation*}
    is also in $\mathcal{R}$. 
  \end{enumerate}
  Then the orthogonal factorization system $(\mathcal{L},\mathcal{R})$ is the reflective factorization system of a modality. We say that $(\mathcal{L},\mathcal{R})$ \define{is a reflective factorization system} if it satisfies the two properties above.
\end{thm}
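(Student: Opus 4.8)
The plan is to construct a modality $\modal$ directly from the factorization system and then check that its class of $\modal$-equivalences is $\mathcal{L}$ and its class of $\modal$-\'etale maps is $\mathcal{R}$. First I would define the candidate subuniverse by declaring a type $A$ to be \emph{modal} when the terminal projection $A\to\unit$ lies in $\mathcal{R}$; since $\mathcal{R}$ is valued in propositions this is a subuniverse $P:\UU\to\mathsf{Prop}$. The modal operator and unit are obtained by factoring the terminal projection as $A\xrightarrow{\eta_A}\modal A\to\unit$ with $\eta_A\in\mathcal{L}$ and $\modal A\to\unit\in\mathcal{R}$, so that $\modal A$ is modal by construction. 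The universal property of $\modal$-localization is exactly orthogonality: for modal $Y$, the relation $\eta_A\perp(Y\to\unit)$ together with $\eqv{\unit^{\modal A}}{\unit}$ and $\eqv{\unit^A}{\unit}$ shows that $\blank\circ\eta_A:(\modal A\to Y)\to (A\to Y)$ is an equivalence. This yields a reflective subuniverse, which I would upgrade to a modality by verifying $\Sigma$-closure (\cref{prp:modality}(i)) using property (2): for $A$ modal and $B:A\to\UU_\modal$, the first projection $\sm{x:A}B(x)\to A$ is in $\mathcal{R}$ by property (2), and composing with $A\to\unit\in\mathcal{R}$ shows $\sm{x:A}B(x)$ is modal.

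The key auxiliary lemma is that \emph{every map between modal types lies in $\mathcal{R}$}. This is the right-cancellation property of $\mathcal{R}$: for $g:C\to D$ with $C,D$ modal and any $\ell\in\mathcal{L}$, the exponential square for $\ell\perp(D\to\unit)$ is a pullback and the square for $\ell\perp(C\to\unit)$ is a pullback; stacking them horizontally and applying the pasting lemma for pullbacks forces the square for $\ell\perp g$ to be a pullback, so $g\in\mathcal{R}$. The same exponential-pasting argument gives the dual left-cancellation for $\mathcal{L}$, namely that $hk,k\in\mathcal{L}$ implies $h\in\mathcal{L}$.

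With these facts I would match the two classes. For the equality of $\mathcal{R}$ with the $\modal$-\'etale maps: if $f\in\mathcal{R}$, then in the $\modal$-naturality square the horizontal maps $\eta$ lie in $\mathcal{L}$ while the vertical maps $f$ and $\modal f$ lie in $\mathcal{R}$ (the latter by the lemma, since $\modal f$ is between modal types), so property (1) makes the square a pullback and $f$ is $\modal$-\'etale; conversely a $\modal$-\'etale $f$ is a base change of $\modal f\in\mathcal{R}$, hence in $\mathcal{R}$ by closure of $\mathcal{R}$ under base change. For the equality of $\mathcal{L}$ with the $\modal$-equivalences: if $f\in\mathcal{L}$, then $\modal f\circ\eta_X\htpy\eta_Y\circ f\in\mathcal{L}$, so left-cancellation gives $\modal f\in\mathcal{L}$, but $\modal f$ is between modal types so $\modal f\in\mathcal{R}$ as well, whence $\modal f$ is an equivalence and $f$ is a $\modal$-equivalence. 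Conversely, given a $\modal$-equivalence $f$, factor it as $f\htpy r\circ l$ with $l\in\mathcal{L}$ and $r\in\mathcal{R}$; then $\modal l$ is an equivalence by the inclusion just proved, so $\modal r$ is an equivalence, and since $r\in\mathcal{R}$ is $\modal$-\'etale its naturality square exhibits $r$ as a base change of the equivalence $\modal r$, so $r$ is an equivalence and $f\simeq l\in\mathcal{L}$.

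I expect the main obstacle to be the bookkeeping around the two cancellation lemmas and the precise orientation needed to invoke property (1): one must take care that in the $\modal$-naturality square the left class sits horizontally and the right class vertically, and that the lemma on maps between modal types is available before the classes are matched. The remaining verifications, that $\modal$ is a reflective subuniverse and that equivalences are stable under base change, are routine.
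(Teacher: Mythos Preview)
Your proposal is correct and follows the same overall strategy as the paper: define the modality by factoring terminal projections, verify the reflective-subuniverse and $\Sigma$-closure axioms exactly as the paper does, and then match the classes.

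The paper's written proof is terser than yours. It only spells out the implication $p\in\mathcal{R}\Rightarrow p$ is $\modal$-\'etale, asserting along the way that ``all the maps in the right square are $\mathcal{R}$-maps'' (which tacitly uses your lemma that every map between modal types lies in $\mathcal{R}$), and it stops there, leaving the converse inclusion and the identification of $\mathcal{L}$ implicit. Your explicit cancellation lemmas and the argument $\modal$-\'etale $\Rightarrow\mathcal{R}$ via base-change closure of the right class make the proof self-contained. Your separate verification that $\mathcal{L}$ coincides with the $\modal$-equivalences is strictly speaking redundant---once $\mathcal{R}$ equals the class of $\modal$-\'etale maps, the fact that both $(\mathcal{L},\mathcal{R})$ and $(\modal\text{-equivalences},\modal\text{-\'etale})$ are orthogonal factorization systems forces the left classes to agree---but the direct argument you give is correct and does no harm.
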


\begin{proof}
  The subuniverse of modal types is defined to be the subuniverse of types $X$ such that the terminal projection $X\to \unit$ is in $\mathcal{R}$. The modal operator $\modal$ is defined by the $(\mathcal{L},\mathcal{R})$-factorization of the terminal projection:
  \begin{equation*}
    \begin{tikzcd}
      X \arrow[r,"\in\mathcal{L}"] & \modal X \arrow[r,"\in\mathcal{R}"] & \unit.
    \end{tikzcd}
  \end{equation*}
  We first show that this is a reflective subuniverse. Thus, we have to show that for any $\modal$-modal type $Y$, the precomposition function
  \begin{equation*}
    (\modal X\to Y)\to (X\to Y)
  \end{equation*}
  is an equivalence. This follows from orthogonality, since the square
  \begin{equation*}
    \begin{tikzcd}
      Y^{\modal X} \arrow[r] \arrow[d] & Y^X \arrow[d] \\
      \unit^{\modal X} \arrow[r] & \unit^X
    \end{tikzcd}
  \end{equation*}
  is a pullback square if $Y\to\unit$ is in $\mathcal{R}$.

  Next, we show that the reflective subuniverse $\modal$ is $\Sigma$-closed, which is one of the equivalent conditions on a reflective subuniverse to be a modality. Consider a type $X$ such that the terminal projection $X\to\unit$ is in $\mathcal{R}$, and consider a type family $P$ over $X$ such that the terminal projection $P(x) \to \unit$ is in $\mathcal{R}$ for each $x:X$. Then it follows by assumption (ii) that the map
  \begin{equation*}
    \Big(\sm{x:X}P(x)\Big)\to\Big(\sm{x:X}\unit\Big)
  \end{equation*}
  is in $\mathcal{R}$. Thus we see that the composite
  \begin{equation*}
    \begin{tikzcd}
      \sm{x:X}P(x) \arrow[r] & X \arrow[r] & \unit
    \end{tikzcd}
  \end{equation*}
  is in $\mathcal{R}$, which shows that the reflective subuniverse $\modal$ is $\Sigma$-closed. We conclude that it is a modality.

  It remains to show that a map is in $\mathcal{R}$ if and only if it is $\modal$-\'etale. To see this, consider the diagram
  \begin{equation*}
    \begin{tikzcd}
      E \arrow[d,swap,"p"] \arrow[r] & \modal E \arrow[d,"\modal p"] \arrow[r] & \unit \arrow[d] \\
      B \arrow[r] & \modal B \arrow[r] & \unit,
    \end{tikzcd}
  \end{equation*}
  where $p$ is assumed to be in $\mathcal{R}$. The top and bottom maps in the left square are $\mathcal{L}$-maps. Moreover, all the maps in the right square are $\mathcal{R}$-maps. Hence the left square is a pullback by assumption (i).
\end{proof}

Recall from \cite[Section 1]{RijkeSpittersShulman} that there are four equivalent ways of saying what a modality is:
\begin{enumerate}
\item A higher modality.
\item A uniquely eliminating modality.
\item A $\Sigma$-closed reflective subuniverse.
\item A stable orthogonal factorization system.
\end{enumerate}
In other words, the \emph{type} of higher modalities is equivalent to the \emph{type} of uniquely eliminating modalities, and so on. Each of these equivalences preserves the underlying subuniverse of modal types. We can now add a fifth structure to this list:
\begin{enumerate}
  \setcounter{enumi}{4}
\item A reflective factorization system.
\end{enumerate}
Note, however, that this does \emph{not} mean that an orthogonal factorization system is stable if and only if it is reflective. The reflective and stable orthogonal factorization systems of a modality coincide if and only if the modality is lex.

\begin{thm}
  The type of modalities is equivalent to the type of reflective factorization systems.
\end{thm}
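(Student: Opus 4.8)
The plan is to exhibit the two maps between these types and verify they are mutually inverse. In one direction, a modality $\modal$ is sent to its reflective factorization system $(\mathcal{L},\mathcal{R})=(\text{$\modal$-equivalences},\text{$\modal$-\'etale maps})$; this assignment lands in reflective factorization systems by \cref{thm:rfs_orthogonal} (it is an orthogonal factorization system) together with \cref{prop:LR-cartesian,prop:R-stable} (it satisfies the two defining properties of \cref{thm:rfs}). In the other direction, a reflective factorization system $(\mathcal{L},\mathcal{R})$ is sent to the modality constructed in \cref{thm:rfs}, whose subuniverse of modal types is $\{X : (X\to\unit)\in\mathcal{R}\}$. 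I would first record that both assignments are well-defined functions on the respective types, so that it remains only to check the two round-trips.

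For the round-trip starting from a modality $\modal$, I would compute the subuniverse of modal types of the modality $\Psi(\Phi(\modal))$ produced by \cref{thm:rfs}. A type $X$ is modal for this modality exactly when $X\to\unit$ is $\modal$-\'etale, i.e.\ when the $\modal$-naturality square of $X\to\unit$ is a pullback. Since $\modal\unit\simeq\unit$, that square is a pullback if and only if $\eta:X\to\modal X$ is an equivalence, i.e.\ if and only if $X$ is $\modal$-modal. Hence $\Psi(\Phi(\modal))$ and $\modal$ have the same subuniverse of modal types. Because the reflector and unit of a reflective subuniverse are determined by the subuniverse, and $\Sigma$-closedness is a property of it, a modality is determined by its subuniverse of modal types (this is the content of the equivalences between the four notions of \cite{RijkeSpittersShulman} preserving modal types); therefore $\Psi(\Phi(\modal))=\modal$.

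For the round-trip starting from a reflective factorization system $(\mathcal{L},\mathcal{R})$, let $\modal$ be the modality of \cref{thm:rfs}. The final part of the proof of \cref{thm:rfs} shows that a map is in $\mathcal{R}$ if and only if it is $\modal$-\'etale, so $\mathcal{R}$ coincides with the right class of $\Phi(\modal)$. It then suffices to recall that in any orthogonal factorization system the left class is exactly the class of maps left orthogonal to every map in the right class (given $f$ left orthogonal to all of $\mathcal{R}$, its factorization $f\simeq f_{\mathcal{R}}\circ f_{\mathcal{L}}$ forces $f_{\mathcal{R}}$ to be an equivalence). Applying this characterization to the two orthogonal factorization systems $(\mathcal{L},\mathcal{R})$ and $\Phi(\modal)$, which share the same right class, gives $\mathcal{L}=\text{$\modal$-equivalences}$, whence $\Phi(\Psi(\mathcal{L},\mathcal{R}))=(\mathcal{L},\mathcal{R})$.

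Finally, since being an orthogonal factorization system, satisfying the two reflectivity properties, being a reflective subuniverse, and being $\Sigma$-closed are all \emph{propositions}, the identifications of underlying data above upgrade to identifications of the full structures, so $\Phi$ and $\Psi$ are mutually inverse equivalences. The step I expect to be the main obstacle is the bookkeeping that these structures are genuinely determined by their underlying data — concretely, that a modality is determined by its subuniverse of modal types and that an orthogonal factorization system is determined by its right class — since this is where one must carefully exploit that the surrounding axioms are property-like rather than structure.
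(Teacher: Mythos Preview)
Your proposal is correct and follows essentially the same approach as the paper. The paper's proof is extremely terse: it simply observes that \cref{thm:rfs} already establishes $\mathcal{R}=\{\modal\text{-\'etale maps}\}$, so a reflective factorization system is determined by its modal types, and then invokes (implicitly, as you do explicitly) that a modality is likewise determined by its modal types. Your version spells out both round-trips and the propositional bookkeeping, but the underlying idea---reduce both structures to their subuniverse of modal types---is identical.
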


\begin{proof}
  In the proof of \cref{thm:rfs} we showed that the right class $\mathcal{R}$ of a reflective factorization system is precisely the class of \'etale maps for the underlying modality. In other words, a reflective factorization system is completely determined by its modal types, so the claim follows.
\end{proof}

\section{Applications in real-cohesive homotopy type theory}
\label{subsection:toplogical stacks}

In \cite[Section 8]{ShulmanRealCohesion} Mike Shulman introduces real-cohesive homotopy type theory.
This type theory is a candidate for an internal language for some specific \emph{cohesive} ($\infty$,1)-toposes.
The term ``cohesion'' refers to a higher analog 
of Lawvere's axiomatic cohesion \cite{Lawvere07} developed by Urs Schreiber \cite{SchreiberDcct}.

In this section, we will assume all the rules of Shulman's real-cohesive homotopy type theory which he also assumes in his article.
Additionally, we will assume Shulman's Axiom ``$\mathrm{R}\flat$''.
We will use univalence without mention and, as Shulman does, we will assume propositional resizing.
From now on, we will refer to this type theory as real-cohesion.
Following Shulman's notation, we will write  ``$\bR$'' for the type of \emph{Dedekind reals}, which will be small by propositional resizing.

In real-cohesion, types can have \emph{both} topological structure and homotopical structure.
We can probe the topological structure of some type $X$ by mapping $\bR$ into $X$,
i.e. by looking at \emph{topological paths} $\gamma:\bR\to X$.
Since having \emph{two different} notions of ``paths'' would be confusing,
we decided to follow the terminology of \cite{ShulmanRealCohesion} in this article and call the elements of identity types identifications or equalities.

We will briefly recall some facts and definitions from real-cohesion.
There is an unfortunate name-clash, since 0-truncated types are sometimes called ``discrete''.
The following definition is about \emph{topological} discreteness and is a priori not related to truncation levels.
\begin{defn}
  A type $X$ is \emph{discrete} if and only if the map
  \[ x\mapsto (y\mapsto x) : X \to (\bR \to X) \]
  is an equivalence.
\end{defn}
Note that in real-cohesion, discreteness is defined without any reference to $\bR$.
The definition in real-cohesion just uses the rules of this type theory and that it can be replaced with the definition above,
is exactly the statement of the Axiom ``$\mathrm{R}\flat$'' (see \cite[Section 8]{ShulmanRealCohesion}).
The Dedekind reals are 0-truncated and turn out to be not discrete.
We import the fact, that the following types are discrete:
\[ \emptyt \quad \unit \quad \bN \quad \bZ \quad \sphere{1} \]
where $\emptyt$ is the empty type,
$\unit$ is the unit-type and $\sphere{1}$ is the higher inductive type representing the homotopy type of the $1$-sphere.
Note that the latter is denoted with $\bS^1$ in \cite{UFP}, which we will use for the topological $1$-sphere:
\begin{defn}
  Let $\bS^{1}$ denote the topological sphere given by
  \[ \bS^{1}:\equiv\left\{ (x,y)\in\bR^{2}\left\vert x^2+y^2=1\right.\right\}.\]
\end{defn}

The discrete types are the modal types of a modality that can be constructed as \emph{nullification} at $\bR$,
which is a general construction defined in \cite[Section 2.3]{RijkeSpittersShulman}.
\begin{defn}
  Let $\shape$ be a modality called ``shape'' given by nullification at $\bR$.
\end{defn}
By construction as a nullification at $\bR$, shape will nullify $\bR$, which means $\shape \bR=\unit$. 
In general, shape may be thought of as mapping topological spaces to their \emph{homotopy types}.
Using the rules of real-cohesion, Shulman computes $\shape \bS^1 =\sphere{1}$.

We will denote the modal unit of $\shape$ with $\modalunit_X\colon X \to \shape X$, for a type $X$.

Let $\ast:\bS^1$ be a fixed point on the topological circle.
For $\bS^{1}$ the $\shape$-disk
\[ D (\bS^1,\ast) \equiv \sum_{x:\bS^{1}}\modalunit_{\bS^{1}}(x)=\modalunit(\ast) \]
turns out to be the universal cover of $\bS^{1}$.
But this works only for spaces with trivial higher homotopy groups. 
For the construction of the universal cover of an arbitrary type, this has to be adjusted:
\[ \widetilde{X}:\equiv \sum_{x\colon X} \trunc{0}{\modalunit_X(x)=\ast}. \]
Note that this type would again be a fiber of a unit, if we had a modality that takes the shape and $1$-truncates it.
It is not clear to us, if the simple definition $\shape_1:\equiv\trunc{1}{\blank}\circ\shape$ works.
One way to make it work, would be to show that truncations of discrete types are again discrete types.
But it is not known by the authors if this is true and it seems to be an open problem
\footnote{We have to thank one of our anonymous reviewers for pointing out the problem with the simple definition and the solution we use below.}.
In \cite[Theorem 3.28]{RijkeSpittersShulman} it is shown that for any two accessible modalities,
there is a modality such that its modal types are the meet of the modal types of the two modalities.
So we can make the following definition:
\begin{defn}
Let the $1$-shape, $\shape_1$ be the modality given as the meet of the accessible modalities $\trunc{1}{\blank}$ and $\shape$ with $\shape_1$.
\end{defn}
Then, also from \cite{RijkeSpittersShulman} we know, that a type is $\shape_1$-modal if and only if it is discrete and $1$-truncated.
In \cite[Theorem 6.21]{ShulmanRealCohesion} it is shown that crisply discrete types have discrete $n$-truncations.
So for crisp types $X$, we have
\[ \trunc{1}{\shape X} \simeq \shape_1 X\text{.} \]
If $X$ is a pointed type, the fundamental group with respect to its topological structure can be defined as the loop space
\[ \Omega \left(\shape_1 X\right). \]
Since we will use this notion only once, we will not denote it with $\pi_1$ to limited confusion with common definitions of homotopy type theory.

With $\shape_1$ and its unit $\eta$, covering spaces and the universal cover are easy to define:
\begin{defn}
  \begin{enumerate}
  \item A map $f:X\to Y$ is called a \emph{covering space}, if it is $\shape_1$-étale and $0$-truncated.
  \item Let $X$ be a pointed type. Then
    \[ \widetilde{X}:\equiv D^{\shape_1}(X,\ast)\equiv\sum_{x:X}\modalunit(x)=\modalunit(\ast) \]
    is the \emph{universal cover} of $X$.
  \end{enumerate}

\end{defn}
The following observations justify these names:
\begin{rmk}
  Let $X$ be any pointed type.
  \begin{enumerate}
  \item The projection from the universal cover $\widetilde{X}$ is a covering space.
  \item We have $\shape_1 \widetilde{X} = \unit$.
  \item Let $f:Y\to X$ be a covering space. Then we have the following lifting property:
    A map $g:Z\to X$ lifts uniquely to $Y$, if $\shape_1 g$ lifts to $\shape_1 Y$ along $\shape_1 f$.
  \item Let $f:Y\to X$ be a covering space and $f$ a pointed map. Then there is a unique map $\widetilde{X}\to Y$ such that
    \begin{center}
      \begin{tikzcd}
        \widetilde{X}\arrow[rr]\arrow[dr] & & Y\arrow[dl, "f"] \\
        & X & \\
      \end{tikzcd}
    \end{center}
    commutes.
  \end{enumerate}
\end{rmk}
\begin{proof}
  \begin{enumerate}
  \item By lemma \ref{cor:etale_lex}. 
  \item Applying theorem \ref{thm:modal_descent} to $\unit\to \shape_1 X$ yields this result directly.
  \item This is the universal property of the pullback square from the definition of $\modal$-étale maps.
  \item This is an application of (iii).
  \end{enumerate}
\end{proof}

The construction of the covering spaces corresponding to a subgroup $H\subseteq \pi_1(X)$ for a path connected $X$,
can be done by applying the delooping construction of \cite{LicataFinster} to the inclusion map of $H$ to get a map $Bi:BH\to\shape_1(X)$ and pulling $Bi$ back along $\modalunit$. In other words, we use that any subgroup $H\subseteq\pi_1(X)$ can be represented by an action of $\pi_1(X)$ on a discrete $0$-type
\footnote{}
and therefore a map $BH\to \shape_1X$, with discrete $BH$.

To get the full correspondence for some general type $X$ of actions of the fundamental groupoid of $X$ on sets and covering spaces over $X$,
we can apply theorem \ref{thm:modal_descent} to $\shape_1$ to get:

\begin{thm}
  \begin{enumerate}
  \item Let $X$ be a type. Then the type of $\shape_1$-étale maps into $X$ and the type of $\shape_1$-modal dependent types over $\shape_1 X$ are equivalent.
  \item The type of covering spaces and the type of maps $\shape_1X\to \mathcal U_{\shape_0}$ are equivalent.
  \item Let $X$ be pointed and such that $\shape_1 X$ is connected. Then $\shape_1X\to \mathcal U_{\shape_0}$ is the type of actions of the fundamental group $\Omega(\shape_1 X)$ on discrete $0$-types and this type is again equivalent to covering spaces of $X$.
  \end{enumerate}
\end{thm}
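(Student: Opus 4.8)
The plan is to obtain all three parts from the modal descent theorem (\cref{thm:modal_descent}) applied to the modality $\shape_1$, together with the equivalence between maps into a type and type families over it, and the delooping of $1$-truncated connected types.

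For part (i) I would specialize \cref{thm:modal_descent} to $\modal\defeq\shape_1$, giving an equivalence
\[ \et/X \;\simeq\; \sm{A:\UU_{\shape_1}}(A\to\shape_1 X). \]
It then remains to identify the right-hand side with the type $\shape_1 X\to\UU_{\shape_1}$ of $\shape_1$-modal dependent types over $\shape_1 X$. By the standard equivalence between maps into $\shape_1 X$ and type families over it, a map $f:A\to\shape_1 X$ corresponds to the family $y\mapsto\fib{f}{y}$; each such fiber is the pullback of the cospan $A\to\shape_1 X\leftarrow\unit$ of modal types, hence is modal since the modal types are closed under pullbacks. Conversely, if $P:\shape_1 X\to\UU_{\shape_1}$ then $\sm{y:\shape_1 X}P(y)$ is modal by $\Sigma$-closedness of $\shape_1$. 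Thus the correspondence restricts to $\sm{A:\UU_{\shape_1}}(A\to\shape_1 X)\simeq(\shape_1 X\to\UU_{\shape_1})$, which proves (i).

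For part (ii) I would cut the equivalence of (i) down to subtypes. A covering space is a $\shape_1$-étale map that is moreover $0$-truncated, while a map $\shape_1 X\to\UU_{\shape_0}$ is a $\shape_1$-modal family over $\shape_1 X$ whose values are discrete $0$-types; since every $\shape_1$-modal type is already discrete, the extra condition on the right is exactly that the fibers are $0$-truncated. Under (i) a $\shape_1$-étale map $g$ corresponds to $y\mapsto\fib{\shape_1 g}{y}$, and étaleness gives $\fib{g}{x}\simeq\fib{\shape_1 g}{\eta(x)}$, so $g$ is $0$-truncated precisely when the associated family is $0$-truncated on the image of $\eta$. The key point is that $\eta:X\to\shape_1 X$ is surjective: any proposition $P$ is $(-1)$-truncated and discrete (the map $P\to(\bR\to P)$ is an equivalence, as $\bR$ is inhabited and $P$ is a proposition), hence $\shape_1$-modal, so the units are surjective by \cref{cor:units-surjective}. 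Since being a $0$-type is a proposition, surjectivity of $\eta$ promotes ``$0$-truncated on the image of $\eta$'' to ``$0$-truncated everywhere'', so the two subtype conditions match and the equivalence of (i) restricts to the claimed one.

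For part (iii) I would use that $\shape_1 X$ is pointed, connected, and $1$-truncated (being $\shape_1$-modal), hence equivalent to the delooping $BG$ of the group $G\defeq\Omega(\shape_1 X)$ via the delooping construction of \cite{LicataFinster}. Maps $BG\to\UU_{\shape_0}$ then correspond to $G$-actions on discrete $0$-types: such a map is determined by its value $V$ at the basepoint together with the automorphisms of $V$ induced by the loops in $G$, and since $\UU_{\shape_0}$ is a $1$-type and $BG$ is connected, no higher coherence remains, so this datum is precisely an action of $G$ on $V:\UU_{\shape_0}$. Composing with part (ii) identifies these actions with covering spaces of $X$. I expect the main obstacle to lie in the bookkeeping of part (ii) — matching the two subtype conditions, which rests on surjectivity of the $\shape_1$-unit and on $0$-truncatedness being a proposition — and in invoking, in part (iii), the delooping of $1$-truncated connected types and the standard correspondence between maps out of $BG$ and $G$-actions, which is exactly where connectedness and $1$-truncatedness of $\shape_1 X$ are used.
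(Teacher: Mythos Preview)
Your proposal is correct and follows essentially the same approach as the paper: part (i) is modal descent for $\shape_1$ together with the slice/family correspondence, part (ii) restricts this to the $0$-truncated subtypes using surjectivity of the $\shape_1$-unit, and part (iii) invokes the delooping of pointed connected $1$-types. The paper's proof is terser---it cites \cite{favonia-thesis} and \cite{ulrik-egbert-floris-groups} for part (iii) rather than \cite{LicataFinster}, and for part (ii) it just says ``by pullback pasting and surjectivity of $\modalunit_X$''---whereas you spell out explicitly why propositions are $\shape_1$-modal (hence the unit is surjective) and how the $0$-truncation condition transfers along $\eta$; this extra care is helpful and not a deviation.
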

\begin{proof}
  \begin{enumerate}
  \item This is just lemma \ref{thm:modal_descent} applied to $\shape_1$.
  \item By pullback pasting and surjectivity of $\modalunit_X$, fibers of $1$-covering spaces over $X$ are always equivalent to values of the corresponding morphism $\shape_1 X\to \mathcal U_{\shape_1}$ and vice versa.
  \item The equivalence holds by (ii).
    That maps of the form $\rho:\shape_1 X\to \mathcal U_{\shape_0}$ are actions of the loop space of $\shape_1 X$ on the value $\rho(\ast)$ is a consequence of the \emph{homotopical} covering theory of \cite[Section 3.1]{favonia-thesis} and \cite[Section 7.1]{ulrik-egbert-floris-groups}.
  \end{enumerate}
\end{proof}

Similar generalizations of the classical topological correspondence are known on the classical side
for example for cohesive $\infty$-stacks \cite[Section 5.2.7]{SchreiberDcct} or \cite{dmr-2covers}.
The introduction of the latter also gives more details on the history of the subject,
in particular concerning definitions of covering spaces topological and differentiable stacks.

\section*{Appendix: Analogous constructions in algebraic geometry}
\label{subsection:algebraic geometry}
The results we present here are certainly known to experts in algebraic geometry,
but we were not able to find a suitable reference in the literature.
The purpose of this section is to present the name-giving analogs of $\modal$-étale maps and $\modal$-disks from algebraic geometry.

Noetherian schemes are spaces of interest in algebraic geometry. There is a notion of formally étale maps between such spaces.
One purpose of this section is to show that such maps are characterized in very much the same way as $\modal$-étale maps:
We will define a pointed endofunctor on a category containing Noetherian schemes
such that the maps with cartesian naturality squares are precisely the formally étale maps.
Also in this section, we will show that formal disks or formal neighbourhoods of points can be constructed analogous to $\modal$-disks.

The functor $\Im$ we will define below, arises most naturally in algebraic geometry but can also be adapted to differential geometry.
How an analogous functor can be used in differential geometry is described and studied intensively in \cite{SyntheticPDEs}.

In the following, $k$ will always be a field and all rings and algebras are assumed to be commutative and equipped with a unit for multiplication.
We denote the category of finitely generated algebras over $k$ with $k\mathrm{-Alg}_{\mathrm{fg}}$.
That means, that any $A\in k\mathrm{-Alg}_{\mathrm{fg}}$ is a quotient $A=k[x_1,\dots,x_n]/(f_1,\dots,f_m)$.
These algebras may contain nilpotent elements, i.e. elements $x\in A$, such that $x\neq 0$, but $x^n=0$ for some $n\in\bN$.
Nilpotent elements will be important for our constructions, since they represent infinitesimals.
This can roughly be explained by the analogy
that the elements of the algebras are to be thought of as generalized coordinate functions
and the nilpotents represent coordinates that are so (infinitesimally) small, that some power is actually zero. 

We use the notation $\Spec(A)$ for the Hom-functor $k\mathrm{-Alg}_{\mathrm{fg}}(A,\blank)$ from $k\mathrm{-Alg}^{\mathrm{op}}_{\mathrm{fg}}$ to the category of sets.
These functors represent so called \emph{affine Noetherian $k$-schemes} and they form the basic building blocks of spaces called Noetherian $k$-schemes
(see \cite[Chapter II]{hartshorne} for more on schemes).
We will use no property of Noetherian $k$-schemes here,
except that we can descent to affine Noetherian $k$-schemes .

For any $X\in\mathrm{Psh}(k\mathrm{-Alg}^\mathrm{op}_{\mathrm{fg}})$,
the functor $\Im X$ defined pointwise by
\[ (\Im X)(A):\equiv X(A/\sqrt{0})\]
is again a functor from $X\in\mathrm{Psh}(k\mathrm{-Alg}^\mathrm{op}_{\mathrm{fg}})$ to the sets.
So $\Im$ is an endofunctor on the presheaf category $\mathrm{Psh}(k\mathrm{-Alg}^\mathrm{op}_{\mathrm{fg}})$.

The name $\modal$-étale is an adaption of the name ``formally étale'' for general modalities.
The name ``formally étale'' was used in \cite{wellen-thesis}, which reused the name from \cite{SyntheticPDEs}.
The original definition of formally étale maps is from algebraic geometry.
The definition of formally étale maps in \cite[§ 17]{GrothendieckDieudonne} states that a comparison map to a pullback should be an isomorphism,
which is equivalent to the unique lifting condition in the following definition:
\begin{defn}
  A morphisms of schemes $\varphi:X\to Y$ is \emph{formally étale},
  if for all rings $R$ and all nilpotent ideals $N$ in $R$ all squares
  \begin{center}
    \begin{tikzcd}
      \mathrm{Spec(R/N)}\arrow[r]\arrow[d] & X\arrow[d, "f"] \\
      \mathrm{Spec(R)}\arrow[r]\arrow[ru, dashed, "\exists!"] & Y
    \end{tikzcd}
  \end{center}
  have a unique lift like indicated in the diagram.
\end{defn}
We will now make a remark which explains how the formally étale maps from algebraic geometry relate to our notion of $\modal$-étale.
The presented fact and its proof is mostly a repetition of a proof from \cite[Section 4.4]{wellen-thesis}.
We use the fact, that the Noetherian $k$-schemes are embedded in the category $X\in\mathrm{Psh}(k\mathrm{-Alg}^\mathrm{op}_{\mathrm{fg}})$.
\begin{rmk}
  A morphism of $f:X\to Y$ Noetherian schemes $X,Y$ is formally étale if and only if the naturality square
  \begin{center}
    \begin{tikzcd}
      X\arrow[r]\arrow[d, "f", swap] & \Im X\arrow[d, "\Im f"] \\
      Y\arrow[r] & \Im Y
    \end{tikzcd}
  \end{center}
  is a pullback square.
\end{rmk}
\begin{proof}
  Let $X$ and $Y$ be Noetherian $k$-schemes. 
  We will first show that a morphism of schemes $f:X\to Y$ is formally étale,
  if and only if for all $A\in\fgkAlg$ all squares
  \begin{center}
    \begin{tikzcd}
      \mathrm{Spec(A/\sqrt{0})}\arrow[r]\arrow[d] & X\arrow[d, "f"] \\
      \mathrm{Spec(A)}\arrow[r]\arrow[ru, dashed, "\exists!"] & Y
    \end{tikzcd}
  \end{center}
  have a unique lift like indicated in the diagram. Let us call this property (1).
  Since $\sqrt{0}$ is always nilpotent in a Noetherian ring, (1) is implied if $f$ is formally étale.
  
  The property formally étale is known to be local in the source \cite[§ 17.1.6]{GrothendieckDieudonne}, so we can assume $X$ and $Y$ to be affine.
  For affine $X=\mathrm{Spec(S)}$ and $Y=\mathrm{Spec(S)}$, all squares
  \begin{center}
    \begin{tikzcd}
      \mathrm{Spec(A/N)}\arrow[r]\arrow[d] & X\arrow[d, "f"] \\
      \mathrm{Spec(A)}\arrow[r, "\mathrm{Spec}(\varphi)"] & Y
    \end{tikzcd}
  \end{center}
  factor as 
  \begin{center}
    \begin{tikzcd}
      \mathrm{Spec(A/N)}\arrow[r]\arrow[d] & \mathrm{Spec}(\mathrm{im}(\varphi)/(\mathrm{im}(\varphi)\cap N))\arrow[r]\arrow[d] & X\arrow[d, "f"] \\
      \mathrm{Spec(A)}\arrow[r] & \mathrm{Spec}(\mathrm{im}(\varphi))\arrow[r] & Y
    \end{tikzcd}
  \end{center}
  That means we can assume $A$ to be Noetherian, if $X$ and $Y$ are Noetherian for the sake of checking if $f$ is formally étale.
  So let us assume (1) holds. Let $A$ be Noetherian and let us construct a unique lift in
  \begin{center}
    \begin{tikzcd}
      \mathrm{Spec(A/N)}\arrow[r]\arrow[d] & X\arrow[d, "f"] \\
      \mathrm{Spec(A)}\arrow[r] & Y
    \end{tikzcd}
  \end{center}
  We extend the square by reducing $R$ or equivalently $R/N$:
  \begin{center}
    \begin{tikzcd}
      \mathrm{Spec}(A/\sqrt{0})\arrow[rd]\arrow[d] & \\
      \mathrm{Spec(A/N)}\arrow[r]\arrow[d] & X\arrow[d, "f"] \\
      \mathrm{Spec(A)}\arrow[r] & Y
    \end{tikzcd}
  \end{center}
  There are two ways to view the boundary of this diagram as a square, so we can apply (1) in two different ways.
  One application tells us, that the map $\mathrm{Spec}(A/N)\to X$ is the unique one making the diagram commute.
  The second application yields a unique lift:
  \begin{center}
    \begin{tikzcd}
      \mathrm{Spec}(A/\sqrt{0})\arrow[rd]\arrow[d] & \\
      \mathrm{Spec(A/N)}\arrow[d] & X\arrow[d, "f"] \\
      \mathrm{Spec(A)}\arrow[r]\arrow[ur, dashed] & Y
    \end{tikzcd}
  \end{center}
  which is also a lift in the original square by the uniqueness of the map $\mathrm{Spec}(A/N)\to X$.
  This proves that (1) implies that $f$ is formally étale.

  So what remains to be shown is that (1) is equivalent to 
  \begin{center}
    \begin{tikzcd}
      X\arrow[r]\arrow[d, "f", swap] & \Im X\arrow[d, "\Im f"] \\
      Y\arrow[r] & \Im Y
    \end{tikzcd}
  \end{center}
  being a pullback. This is true if and only if it is true pointwise, i.e. for all $k$-algebras $A$,
  the squares 
  \begin{center}
    \begin{tikzcd}
      X(A)\arrow[r]\arrow[d, "f_A", swap] & \Im X(A)\arrow[d, "\Im f_A"] & = X(A/\sqrt{0}) \\
      Y(A)\arrow[r] & \Im Y(A) & = Y(A/\sqrt{0})
    \end{tikzcd}
  \end{center}
  have to be pullback squares. But this is just (1) by Yoneda.
\end{proof}

\ignore{ ... to much work now to dig up a reference for this one ...
Early versions of the characterization of $\Im$-étale maps in \cref{lem:etale_char} were developed with :

Let $X$ and $Y$ be Noetherian $k$-schemes such that the natural map $X\to \Im X$ is pointwise surjective,
then $f:X\to Y$ is formally étale if and only if it induces an isomorphism on the formal disk at $x$ for all $x\in X$.

The condition that $\eta_X:X\to \Im X$ is surjective is implied by formal smoothness which is a reformulation of pointwise surjectivity of $\eta_X$.
The sheaves $D^\Im(X,x)$ turn out to be something well known in algebraic geometry, which we will explore now.
}

In algebraic geometry, there is the concept of the \emph{formal completion} of a closed subspace (see \cite[p.194]{hartshorne} or \cite[10.8]{ega}).
Roughly, the formal completion of a subspace may be thought of as the subspace together with
all points from the surrounding space which are infinitesimally close to the subspace.
In the affine case, where a closed subspace of $\Spec(A)$ is given by an ideal $I\subseteq A$, we can construct a topological ring $\hat{A}$
as the limit of the sequence of quotients by powers of $I$ with discrete topology:
\begin{center}
    \begin{tikzcd}
      \dots\arrow[r] & A/I^3\arrow[r] & A/I^2\arrow[r] & A/I.
    \end{tikzcd}
\end{center}
Let us write $\Delta(A)$ for $A\in\fgkAlg$ with the discrete topology.
The completion yields a functor:
\[ \Spf(\hat{A}) := \fgkAlgTop(\hat{A},\Delta(\_))\]
\begin{rmk}
  Let $X$ be a Noetherian $k$-scheme and $D^\Im(X,x)$ be given as the pullback:
  \begin{center}
    \begin{tikzcd}
      D^\Im(X,x)\arrow[d]\arrow[r] & \unit \arrow[d, "\eta_X\circ x"] \\
      X\arrow[r] & \Im X
    \end{tikzcd}
  \end{center}
  Then $D^\Im(X,x)$ is the formal neighborhood of $x$ in $X$. 
\end{rmk}
\begin{proof}
  Since formal completions are defined by descending to affine schemes, we can assume $X=\Spec(A)$ with $A\in\fgkAlg$.
  Then $x:\unit\to X$ can be rewritten as $x:\Spec(k)\to\Spec(A)$ and thus corresponds to a $k$-algebra homomorphism $A\to k$,
  which is given by modding out a maximal ideal $m\subseteq A$. Let us write $\mathrm{pr}_I$ for the morphism to the quotient by an ideal $I$.
  So the formal neighborhood of $x$ in $X$ is $\Spf(\hat{A})$, where $\hat{A}$ is the completion with respect to $m$.
  This means what we need to show is, that for all $B\in\fgkAlg$, the square
  \begin{center}
    \begin{tikzcd}
      \fgkAlgTop(\hat{A}, \Delta(B))\arrow[r]\arrow[d] & \fgkAlg(k,B)\arrow[d, "\mathrm{pr}_{\sqrt{0}}\circ\blank\circ\mathrm{pr}_m"] \\
      \fgkAlg(A,B)\arrow[r,"\mathrm{pr}_{\sqrt{0}}\circ\blank", swap] & \fgkAlg(A,B/\sqrt{0})
    \end{tikzcd}
  \end{center}
  is a pullback square. This amounts to the following universal property of $\hat{A}$:

  For any $k$-algebra homomorphism $\varphi:A\to B$ such that $\varphi(m)\subseteq \sqrt{0}$, there exists a unique morphisms
  $\hat{\varphi}:\hat{A}\to B$ such that composition with the canonical $A\to\hat{A}$ is $\varphi$.
  For the construction of $\hat{\varphi}$, we may assume that $\hat{A}$ is the limit of
  \begin{center}
    \begin{tikzcd}
      \dots\arrow[r] & A/m^{n+2}\arrow[r] & A/m^{n+1}\arrow[r] & A/m^{n}
    \end{tikzcd}
  \end{center}
  for some $n\in\bN$ such that $\varphi(m^n)=\varphi(m)^n={0}$. So we have a map of sequences:
  \begin{center}
    \begin{tikzcd}
      \dots\arrow[r] & A/m^{n+2}\arrow[r]\arrow[d,"\varphi"] & A/m^{n+1}\arrow[r]\arrow[d,"\varphi"] & A/m^{n}\arrow[d,"\varphi"] \\
      \dots\arrow[r,"\mathrm{id}", swap] & B\arrow[r,"\mathrm{id}", swap] & B\arrow[r,"\mathrm{id}", swap] & B
    \end{tikzcd}
  \end{center}
  And therefore an induced $\hat{\varphi}:\hat{A}\to B$.
\end{proof}

\section{Conclusion}
During the time of writing and revising this article,
$\modal$-étale maps were already used for more calculations in real-cohesion \cite{myers2019}
and there are lots of further direction worth exploring.
Also in \cite{myers2019} the concept of $\modal$-fibrations is introduced
and used to precisely characterize the pullback squares which are preserved by a modality.
This generalizes \cref{cor:etale_lex} (i).

\printbibliography

\end{document}